\newtheorem{thm}{Theorem}[section]
\newtheorem{prop}[thm]{Proposition}
\newtheorem{lemma}[thm]{Lemma}
\newtheorem{cor}[thm]{Corollary}
\numberwithin{equation}{section}
\def\cH{\mathcal{H}}
\def\dom{\mathrm{dom}\,}
\def\eps{\varepsilon}
\def\diag{\mathrm{diag}}
\def\Tr{\mathrm{Tr}\,}
\def\cI{\mathcal{I}}
\def\bC{\mathbb{C}}
\def\bR{\mathbb{R}}
\def\lin{\mathrm{span}}
\def\bv{\mathbf{v}}
\def\bw{\mathbf{w}}
\def\<{\langle}
\def\>{\rangle}
\def\Re{\mathrm{Re}\,}
\def\cO{\mathcal{O}}
\def\cU{\mathcal{U}}
\def\cV{\mathcal{V}}
\def\gap{\mathrm{gap}}
\begin{document}
\baselineskip=15pt
\allowdisplaybreaks

\title{Anti Lie-Trotter formula}

\author{Koenraad M.R.\ Audenaert$^{1,2,}$\footnote{E-mail: koenraad.audenaert@rhul.ac.uk}
\ and
Fumio Hiai$^{3,}$\footnote{E-mail: hiai.fumio@gmail.com}}

\date{\today}
\maketitle

\begin{center}
$^1$\,Department of Mathematics, Royal Holloway University of London, \\
Egham TW20 0EX, United Kingdom
\end{center}

\begin{center}
$^2$\,Department of Physics and Astronomy, Ghent University, \\
S9, Krijgslaan 281, B-9000 Ghent, Belgium
\end{center}

\begin{center}
$^3$\,Tohoku University (Emeritus), \\
Hakusan 3-8-16-303, Abiko 270-1154, Japan
\end{center}

\medskip
\begin{abstract}
\noindent
Let $A$ and $B$ be positive semidefinite matrices.
The limit of the expression $Z_p:=(A^{p/2}B^pA^{p/2})^{1/p}$ as $p$ tends to $0$ is given by the well known
Lie-Trotter-Kato formula. A similar formula holds for the limit of $G_p:=(A^p\,\#\,B^p)^{2/p}$ as $p$ tends to $0$, where $X\,\#\,Y$ is the geometric mean of $X$ and $Y$.
In this paper we study the complementary limit of $Z_p$ and $G_p$ as $p$ tends to $\infty$, with the ultimate goal of finding an explicit formula, which we
call the anti Lie-Trotter formula.
We show that the limit of $Z_p$ exists and find an explicit formula in a special case. The limit of $G_p$ is shown for $2\times2$ matrices only.

\bigskip\noindent
{\it 2010 Mathematics Subject Classification:}
Primary 15A42, 15A16, 47A64

\medskip\noindent
{\it Key Words and Phrases:}
Lie-Trotter-Kato product formula, Lie-Trotter formula, anti Lie-Trotter formula,
positive semidefinite matrix, operator mean, geometric mean, log-majorization, antisymmetric tensor power,
Grassmannian manifold
\end{abstract}

\section{Introduction}

When $H,K$ are lower bounded self-adjoint operators on a Hilbert space $\cH$ and $H_+,K_+$
are their positive parts, the sum of $H$ and $K$ can be given a precise meaning as a lower bounded self-adjoint
operator on the subspace $\cH_0$, which is defined as the closure of $\dom H_+^{1/2}\cap\dom K_+^{1/2}$.
We denote this formal sum as $H\dot+K$.
Then the well-known Lie-Trotter-Kato product formula, as originally established in \cite{Tr,Ka78} and
refined by many authors, expresses the convergence
$$
\lim_{n\to\infty}(e^{-tH/n}e^{-tK/n})^n=e^{-t(H\dot+K)}P_0,\qquad t>0,
$$
in the strong operator topology (uniformly in $t\in[a,b]$ for any $0<a<b$), where $P_0$ is
the orthogonal projection onto $\cH_0$. Although this formula is usually stated for densely-defined
$H,K$, the proof in \cite{Ka78} applies to
the improper case (i.e., $H,K$ are not densely-defined) as well,
under the convention that $e^{-tH}=0$ on $(\dom H)^\perp$ for $t>0$, and similarly for $e^{-tK}$.

The Lie-Trotter-Kato formula can easily be modified to
symmetric form and with a continuous parameter as \cite[Theorem 3.6]{Hi1}
$$
\lim_{p\searrow0}(e^{-ptH/2}e^{-ptK}e^{-ptH/2})^{1/p}=e^{-t(H\dot+K)}P_0,\qquad t>0.
$$
When restricted to matrices (and to $t=1$) this can be rephrased as
\begin{equation}\label{F-1.1}
\lim_{p\searrow0}(A^{p/2}B^pA^{p/2})^{1/p}=P_0\exp(\log A\dot+\log B),
\end{equation}
where $A$ and $B$ are positive semidefinite matrices (written as $A,B\ge0$ below),
$P_0$ is now the orthogonal projection onto the intersection of the supports of $A,B$ and
$\log A\dot+\log B$ is defined as $P_0(\log A)P_0+P_0(\log B)P_0$.

When $\sigma$ is an operator mean \cite{KA} corresponding to an operator monotone function
$f$ on $(0,\infty)$ such that $\alpha:=f'(1)$ is in $(0,1)$, the operator mean version of the
Lie-Trotter-Kato product formula is the convergence \cite[Theorem 4.11]{Hi1}
$$
\lim_{p\searrow0}(e^{-ptH}\,\sigma\,e^{-ptK})^{1/p}
=e^{-t((1-\alpha)H\dot+\alpha K)},\qquad t>0,
$$
in the strong operator topology, for a bounded self-adjoint operator $H$ and a lower-bounded
self-adjoint operator $K$ on $\cH$. Although it is not known whether the above formula holds
even when both $H,K$ are lower bounded (and unbounded), we can verify that \eqref{F-1.1} has the operator mean
version
\begin{equation}\label{F-1.2}
\lim_{p\searrow0}(A^p\,\sigma\,B^p)^{1/p}=P_0\exp((1-\alpha)\log A\dot+\alpha\log B),
\end{equation}
for matrices $A,B\ge0$.  A proof of \eqref{F-1.2} is supplied in an appendix of this paper since it is
not our main theme.

In particular, let $\sigma$ be the geometric mean $A\,\#\,B$ (introduced first in \cite{PW} and
further discussed in \cite{KA}), corresponding to the operator monotone function $f(x)=x^{1/2}$
(hence $\alpha=1/2$). Then \eqref{F-1.2} yields
\begin{equation}\label{F-1.3}
\lim_{p\searrow0}(A^p\,\#\,B^p)^{2/p}=P_0\exp(\log A\dot+\log B),
\end{equation}
which has the same right-hand side as \eqref{F-1.1}.

It turns out that the convergence of both \eqref{F-1.1} and \eqref{F-1.3} is monotone in the log-majorization order.
For $d\times d$ matrices $X,Y\ge0$, the log-majorization relation
$X\prec_{(\log)}Y$ means that
$$
\prod_{i=1}^k\lambda_i(X)\le\prod_{i=1}^k\lambda_i(Y),\qquad1\le k\le d,
$$
with equality for $k=d$, where $\lambda_1(X)\ge\dots\ge\lambda_d(X)$ are the eigenvalues of
$X$ sorted in decreasing order and counting multiplicities.
The Araki-Lieb-Thirring inequality can be written in terms of log-majorization as
\begin{equation}\label{F-1.4}
(A^{p/2}B^pA^{p/2})^{1/p}\prec_{(\log)}(A^{q/2}B^qA^{q/2})^{1/q}
\quad\mbox{if}\quad0<p<q,
\end{equation}
for matrices $A,B\ge0$, see \cite{LT,Ar,AH}. One can also consider the complementary version of
\eqref{F-1.4} in terms of the geometric mean. Indeed, for $A,B\ge0$ we have \cite{AH}
\begin{equation}\label{F-1.5}
(A^q\,\#\,B^q)^{2/q}\prec_{(\log)}(A^p\,\#\,B^p)^{2/p}
\quad\mbox{if}\quad0<p<q.
\end{equation}
Hence, for matrices $A,B\ge0$, we see that
$Z_p:=(A^{p/2}B^pA^{p/2})^{1/p}$ and $G_p:=(A^p\,\#\,B^p)^{2/p}$ both tend to $P_0\exp(\log A\dot+\log B)$ as $p\searrow0$,
with the former decreasing (by \eqref{F-1.4}) and
the latter increasing (by \eqref{F-1.5}) in the log-majorization order.

The main topic of this paper is the complementary question about what happens to the limits of $Z_p$ and $G_p$ as $p$ tends to $\infty$ instead of $0$. Although
this seems a natural mathematical problem, we have not been able to find an explicit
statement of concern in the literature. It is obvious that if $A$ and $B$ are commuting then $G_p=AB=Z_p$,
independently of $p>0$. However, if $A$ and $B$ are not commuting, then the limit behavior of
$Z_p$ and its eigenvalues as $p\to\infty$ is of a rather complicated combinatorial nature,
and that of $G_p$ seems even more complicated.

The problem of finding an explicit formula, which we henceforth call the anti Lie-Trotter formula, also emerges from recent
developments of new R\'enyi relative entropies relevant to quantum information theory.
Indeed, the recent paper \cite{AD} proposed to generalize the R\'enyi relative entropy as
$$
D_{\alpha,z}(\rho\|\sigma):={1\over\alpha-1}\log\Tr
\bigl(\rho^{\alpha/2z}\sigma^{(1-\alpha)/z}\rho^{\alpha/2z}\bigr)^z
$$
for density matrices $\rho,\sigma$ with two real parameters $\alpha,z$, and discussed the
limit formulas when $\alpha,z$ converge to some special values. The limit case of
$D_{\alpha,z}(\rho\|\sigma)$ as $z\to0$ with $\alpha$ fixed is exactly related to our
anti Lie-Trotter problem.

The rest of the paper is organized as follows. In Section \ref{sec2} we prove the existence of the
limit of $Z_p$ as $p\to\infty$ when $A,B$ are $d\times d$ positive semidefinite matrices.
In Section \ref{sec3} we analyze the case when the limit eigenvalue list of $Z_p$ becomes
$\lambda_i(A)\lambda_i(B)$ ($1\le i\le d$), the maximal case in the log-majorization order.
In Section \ref{sec4} we extend the existence of the limit of $Z_p$ to that of
$\bigl(A_1^{p/2}\cdots A_{m-1}^{p/2}A_m^pA_{m-1}^{p/2}\cdots A_1^{p/2}\bigr)^{1/p}$
with more than two matrices. Finally in Section \ref{sec5} we treat $G_p$; however we can prove
the existence of the limit of $G_p$ as $p\to\infty$ only when $A,B$ are $2\times2$
matrices, and the general case must be left unsettled. The paper contains two appendices.
The first is a proof of a technical lemma stated in Section \ref{sec2}, and the second supplies
the detailed proof of \eqref{F-1.2}.
\section{Limit of $(A^{p/2}B^pA^{p/2})^{1/p}$ as $p\to\infty$ \label{sec2}}

Let $A$ and $B$ be $d\times d$ positive semidefinite matrices having the eigenvalues
$a_1\ge\cdots\ge a_d\,(\ge0)$ and $b_1\ge\cdots\ge b_d\,(\ge0)$, respectively, sorted in decreasing
order and counting multiplicities. Let $\{v_1,\dots,v_d\}$ be an orthonormal set of eigenvectors
of $A$ such that $Av_i=a_iv_i$ for $i=1,\dots,d$, and $\{w_1,\dots,w_d\}$ an orthonormal
set of eigenvectors of $B$ in a similar way. Then $A$ and $B$ are diagonalized as
\begin{align}
A&=V\diag(a_1,\dots,a_d)V^*=\sum_{i=1}^da_iv_iv_i^*, \label{F-2.1}\\
B&=W\diag(b_1,\dots,b_d)W^*=\sum_{i=1}^db_iw_iw_i^*. \label{F-2.2}
\end{align}

For each $p>0$ define a positive semidefinite matrix
\begin{equation}\label{F-2.3}
Z_p:=(A^{p/2}B^pA^{p/2})^{1/p},
\end{equation}
whose eigenvalues are denoted as $\lambda_1(p)\ge\lambda_2(p)\ge\dots\ge\lambda_d(p)$, again in
decreasing order and counting multiplicities.

\begin{lemma}\label{L-2.1}
For every $i=1,\dots,d$ the limit
\begin{equation}\label{F-2.4}
\lambda_i:=\lim_{p\to\infty}\lambda_i(p)
\end{equation}
exists, and $a_1b_1\ge\lambda_1\dots\ge\lambda_d\ge a_db_d$.
\end{lemma}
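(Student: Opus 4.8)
The plan is to exploit the log-majorization monotonicity \eqref{F-1.4}: the function $p\mapsto(A^{p/2}B^pA^{p/2})^{1/p}$ is decreasing in the log-majorization order, hence for each fixed $k$ the partial product $\prod_{i=1}^k\lambda_i(p)$ is nonincreasing in $p$, and it is bounded below by $0$; similarly the full product $\prod_{i=1}^d\lambda_i(p)=\det(A)\det(B)$ is constant in $p$. So each partial product $\pi_k(p):=\prod_{i=1}^k\lambda_i(p)$ has a limit $\pi_k$ as $p\to\infty$. If all these limits were strictly positive, we could immediately recover $\lambda_i=\pi_i/\pi_{i-1}$ for each $i$ and conclude; the subtlety is that some $\pi_k$ may go to $0$ (when $A$ or $B$ is singular, or more generally when small eigenvalues collide), so the individual $\lambda_i(p)$ need not be extractable from the $\pi_k(p)$ alone in the limit.

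To handle this I would first reduce to the nonsingular case on the common support: writing $P_0$ for the projection onto $\mathrm{supp}(A)\cap\mathrm{supp}(B)$, one checks that $Z_p\to 0$ outside a subspace governed by $P_0$ in a suitable sense, and that the "surviving" eigenvalues are those of the compression of the problem to $P_0$; alternatively, one can simply note that the bottom eigenvalues that tend to $0$ cause no trouble (their limit is $0$, trivially existing), and focus on the block where all eigenvalues stay bounded away from $0$. After this reduction assume $A,B>0$, so $\det(A)\det(B)>0$ and every $\pi_k(p)\ge\pi_d(p)=\det(A)\det(B)>0$ is bounded below uniformly in $p$; then each $\pi_k:=\lim_{p\to\infty}\pi_k(p)$ exists and is strictly positive by monotone convergence, and therefore $\lambda_i:=\pi_i/\pi_{i-1}=\lim_{p\to\infty}\lambda_i(p)$ exists for every $i$.

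It remains to prove the two-sided bound $a_1b_1\ge\lambda_1$ and $\lambda_d\ge a_db_d$. For the upper bound, $\lambda_1(p)^p=\|A^{p/2}B^pA^{p/2}\|\le\|A^{p/2}\|\,\|B^p\|\,\|A^{p/2}\|=a_1^p b_1^p$, so $\lambda_1(p)\le a_1b_1$ for all $p$ and hence $\lambda_1\le a_1b_1$. For the lower bound, apply the same estimate to the inverses: with $A,B>0$, $\lambda_d(p)^{-p}=\|(A^{p/2}B^pA^{p/2})^{-1}\|=\|A^{-p/2}B^{-p}A^{-p/2}\|\le a_d^{-p}b_d^{-p}$, giving $\lambda_d(p)\ge a_db_d$ for all $p$, hence $\lambda_d\ge a_db_d$. (In the singular case the lower bound $\lambda_d\ge a_db_d$ is automatic once $a_d=0$ or $b_d=0$, since then the right-hand side is $0$.) The chain $\lambda_1\ge\cdots\ge\lambda_d$ is immediate from passing to the limit in $\lambda_1(p)\ge\cdots\ge\lambda_d(p)$.

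The main obstacle I anticipate is the existence claim itself in the genuinely singular or degenerate situation: the log-majorization argument only controls the partial products $\pi_k(p)$, and when $\pi_k(p)\to 0$ one cannot divide to recover $\lambda_k(p)$. Making the reduction to the common support rigorous — showing that the eigenvalues of $Z_p$ split cleanly into a family converging to $0$ and a family controlled by the compression to $P_0$, uniformly enough in $p$ — is the technical heart of the argument, and is presumably where a separate lemma (the "technical lemma" the paper defers to an appendix) is invoked.
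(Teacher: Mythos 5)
Your core strategy — use the log-majorization monotonicity of the partial eigenvalue products and recover individual eigenvalues as quotients — is exactly the paper's. But there are two genuine problems, one of which leaves the singular case unfinished.

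First, you have the direction of monotonicity reversed. From \eqref{F-1.4}, $Z_p\prec_{(\log)}Z_q$ for $0<p<q$, so the partial products $\prod_{i=1}^k\lambda_i(p)$ are \emph{nondecreasing} in $p$ (this is the paper's \eqref{F-2.5}), not nonincreasing as you assert. This doesn't affect the bare existence of each limit $\eta_k:=\lim_{p\to\infty}\prod_{i=1}^k\lambda_i(p)$ — the products are sandwiched between $0$ and $(a_1b_1)^k$, so monotone plus bounded converges either way — but it does change the nature of the singular case: since the products are nondecreasing and nonnegative, $\eta_k=0$ forces $\prod_{i=1}^k\lambda_i(p)=0$ for \emph{all} $p$, a simpler situation than you anticipate.

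Second, and more seriously, you identify the singular case as "the technical heart" and leave it unresolved, speculating that the deferred appendix lemma is what fills it. That is incorrect: the deferred lemma (Lemma \ref{L-2.4}) concerns the Grassmannian embedding and is used only in Theorem \ref{T-2.5}, not here. The paper resolves the singular case in two lines with no reduction to the common support: let $m$ be the largest $k$ with $\eta_k>0$ (set $m=0$ if $\eta_1=0$). For $1\le k\le m$, $\lambda_k(p)=\prod_{i\le k}\lambda_i(p)\big/\prod_{i\le k-1}\lambda_i(p)\to\eta_k/\eta_{k-1}$. For $k=m+1$ the same quotient gives $\lambda_{m+1}(p)\to\eta_{m+1}/\eta_m=0$, and then $\lambda_k(p)\le\lambda_{m+1}(p)\to0$ for all $k>m$. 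Nothing about supports, blocks, or uniform splitting is needed. Your boundedness argument ($\lambda_1(p)\le a_1b_1$ and $\lambda_d(p)\ge a_db_d$, by norm estimates on $Z_p^p$ and, when invertible, its inverse) is fine and equivalent to the paper's observation $(a_1b_1)^pI\ge A^{p/2}B^pA^{p/2}\ge(a_db_d)^pI$.
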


\begin{proof}
Since $(a_1b_1)^pI\ge A^{p/2}B^pA^{p/2}\ge(a_db_d)^pI$, we have
$a_1b_1\ge\lambda_i(p)\ge a_db_d$ for all $i=1,\dots,d$ and all $p>0$. By the
Araki-Lieb-Thirring inequality \cite{Ar} (or the log-majorization \cite{AH}), for every
$k=1,\dots,d$ we have
\begin{equation}\label{F-2.5}
\prod_{i=1}^k\lambda_i(p)\le\prod_{i=1}^k\lambda_i(q)\quad
\mbox{if}\quad0<p<q.
\end{equation}
Therefore, the limit $\eta_k$ of $\prod_{i=1}^k\lambda_i(p)$ as $p\to\infty$ exists for any
$k=1,\dots,d$ so that $\eta_1\ge\dots\ge\eta_d\ge0$. Let $m$ ($0\le m\le d$) be the
largest $k$ such that $\eta_k>0$ (with $m:=0$ if $\eta_1=0$). When $1\le k\le m$, we have
$\lambda_k(p)\to\eta_k/\eta_{k-1}$ (where $\eta_0:=1$) as $p\to\infty$. When $m<d$,
$\lambda_{m+1}(p)\to\eta_{m+1}/\eta_m=0$ as $p\to\infty$. Hence $\lambda_k(p)\to0$ for all
$k>m$. Therefore, the limit of $\lambda_i(p)$ as $p\to\infty$ exists for any $i=1,\dots,d$.
The latter assertion is clear now.
\end{proof}

\begin{lemma}\label{L-2.2}
The first eigenvalue in \eqref{F-2.4} is given by
$$
\lambda_1=\max\{a_ib_j:(V^*W)_{ij}\ne0\},
$$
where $(V^*W)_{ij}$ denotes the $(i,j)$ entry of $V^*W$.
\end{lemma}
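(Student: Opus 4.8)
The plan is to compute the largest eigenvalue $\lambda_1(p)$ of $Z_p$, equivalently the operator norm $\|A^{p/2}B^pA^{p/2}\|^{1/p} = \|B^{p/2}A^{p/2}\|^{2/p}$, and then take $p\to\infty$. Writing $A = \sum_i a_i v_iv_i^*$ and $B = \sum_j b_j w_jw_j^*$ as in \eqref{F-2.1}--\eqref{F-2.2}, we have $B^{p/2}A^{p/2} = \sum_{i,j} a_i^{p/2} b_j^{p/2}\, w_j(w_j^*v_i)v_i^* = \sum_{i,j} a_i^{p/2}b_j^{p/2}\,\overline{(V^*W)_{ij}}\, w_jv_i^*$. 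The Hilbert--Schmidt norm of this matrix is easy to read off: $\|B^{p/2}A^{p/2}\|_2^2 = \sum_{i,j} a_i^p b_j^p\, |(V^*W)_{ij}|^2 = \sum_{(i,j):(V^*W)_{ij}\ne0} a_i^p b_j^p\, |(V^*W)_{ij}|^2$. Since all the coefficients $|(V^*W)_{ij}|^2$ that appear are strictly positive and $p$-independent, a standard $\ell^p$-type estimate gives $\bigl(\|B^{p/2}A^{p/2}\|_2^2\bigr)^{1/p} \to \max\{a_ib_j:(V^*W)_{ij}\ne0\}$ as $p\to\infty$.

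Next I would bridge from the Hilbert--Schmidt norm to the operator norm. On the one hand $\|B^{p/2}A^{p/2}\| \le \|B^{p/2}A^{p/2}\|_2$, so $\limsup_p \|B^{p/2}A^{p/2}\|^{2/p} \le \max\{a_ib_j:(V^*W)_{ij}\ne0\}$. On the other hand, for $d\times d$ matrices one always has $\|X\|_2 \le \sqrt{d}\,\|X\|$, hence $\|B^{p/2}A^{p/2}\|^{2/p} \ge d^{-1/p}\bigl(\|B^{p/2}A^{p/2}\|_2^2\bigr)^{1/p}$, and since $d^{-1/p}\to1$ we get the matching lower bound $\liminf_p \|B^{p/2}A^{p/2}\|^{2/p} \ge \max\{a_ib_j:(V^*W)_{ij}\ne0\}$. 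Combining, $\lambda_1(p) = \|B^{p/2}A^{p/2}\|^{2/p} \to \max\{a_ib_j:(V^*W)_{ij}\ne0\}$, and by Lemma \ref{L-2.1} this limit equals $\lambda_1$.

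The only point requiring a little care is the passage from $\|\cdot\|_2$ to $\|\cdot\|$: I should make sure the crude inequalities $\|X\|\le\|X\|_2\le\sqrt d\,\|X\|$ are invoked correctly and that no coefficient $|(V^*W)_{ij}|^2$ is accidentally zero in a way that removes the maximizing pair — but by construction the maximum is taken precisely over the index set where $(V^*W)_{ij}\ne0$, so the term $a_{i_0}^pb_{j_0}^p|(V^*W)_{i_0j_0}|^2$ with $(i_0,j_0)$ achieving the max is genuinely present and dominates. An alternative, if one prefers to avoid Hilbert--Schmidt norms, is to estimate $\|B^{p/2}A^{p/2}\|$ directly: the lower bound follows from $\|B^{p/2}A^{p/2}\| \ge \|w_{j_0}^* B^{p/2}A^{p/2} v_{i_0}\|/(\|w_{j_0}\|\,\|v_{i_0}\|)$-type testing against the extremal eigenvectors (giving $\ge a_{i_0}^{p/2}b_{j_0}^{p/2}|(V^*W)_{i_0j_0}|$), and the upper bound from the triangle inequality $\|B^{p/2}A^{p/2}\| \le \sum_{i,j} a_i^{p/2}b_j^{p/2}|(V^*W)_{ij}|$, which is again a finite sum of exponentials with positive coefficients whose $p$-th root tends to the largest base. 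This self-contained version is perhaps cleaner and avoids any appeal to norm-equivalence constants; I expect the writeup to take whichever route is shortest.
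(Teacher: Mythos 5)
Your argument is correct and is essentially the paper's own proof in different notation: since $\|B^{p/2}A^{p/2}\|_2^2=\Tr A^{p/2}B^pA^{p/2}$, your Hilbert--Schmidt computation and the norm-equivalence bounds $\|X\|\le\|X\|_2\le\sqrt d\,\|X\|$ reproduce exactly the paper's sandwich $\lambda_1(p)^p\le\Tr A^{p/2}B^pA^{p/2}\le d\,\lambda_1(p)^p$ together with the explicit evaluation of the trace as $\sum_{i,k}|(V^*W)_{ik}|^2a_i^pb_k^p$. No gap; the two proofs coincide.
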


\begin{proof}
Write $V^*W=[u_{ij}]$. We observe that
$$
\bigl(V^*A^{p/2}B^pA^{p/2}V\bigr)_{ij}
=\sum_{k=1}^du_{ik}\overline u_{jk}a_i^{p/2}a_j^{p/2}b_k^p.
$$
In particular,
$$
\bigl(V^*A^{p/2}B^pA^{p/2}V\bigr)_{ii}
=\sum_{k=1}^d|u_{ik}|^2a_i^pb_k^p
$$
and hence we have
$$
\lambda_1(p)^p\le\Tr A^{p/2}B^pA^{p/2}
=\sum_{i=1}^d\sum_{k=1}^d|u_{ik}|^2a_i^pb_k^p
\le d^2\max\{a_i^pb_k^p:u_{ik}\ne0\},
$$
where $\Tr$ is the usual trace functional on $d\times d$ matrices. Therefore,
\begin{equation}\label{F-2.6}
\lambda_1(p)\le d^{2/p}\max\{a_ib_k:u_{ik}\ne0\}.
\end{equation}
On the other hand, we have
$$
d\lambda_1(p)^p\ge\Tr A^{p/2}B^pA^{p/2}
\ge\min\{|u_{ik}|^2:u_{ik}\ne0\}\max\{a_i^pb_k^p:u_{ik}\ne0\}
$$
so that
\begin{equation}\label{F-2.7}
\lambda_1(p)\ge\biggl({\min\{|u_{ik}|^2:u_{ik}\ne0\}\over d}\biggr)^{1/p}
\max\{a_ib_k:u_{ik}\ne0\}.
\end{equation}
Estimates \eqref{F-2.6} and \eqref{F-2.7} give the desired expression immediately.
In fact, they prove the existence of the limit in \eqref{F-2.4} as well apart from
Lemma \ref{L-2.1}.
\end{proof}

In what follows, for each $k=1,\dots,d$ we write $\cI_d(k)$ for the set of all subsets
$I$ of $\{1,\dots,d\}$ with $|I|=k$. For $I,J\in\cI_d(k)$ we denote by $(V^*W)_{I,J}$
the $k\times k$ submatrix of $V^*W$ corresponding to rows in $I$ and columns in $J$;
hence $\det(V^*W)_{I,J}$ denotes the corresponding minor of $V^*W$. We also write
$a_I:=\prod_{i\in I}a_i$ and $b_I:=\prod_{i\in I}b_i$. Since $\det(V^*W)\ne0$, note that
for any $k=1,\dots,d$ and any $I\in\cI_d(k)$ we have $\det(V^*W)_{I,J}\ne0$ for some
$J\in\cI_d(k)$, and that for any $J\in\cI_d(k)$ we have $\det(V^*W)_{I,J}\ne0$ for some
$I\in\cI_d(k)$.

\begin{lemma}\label{L-2.3}
For every $k=1,\dots,d$,
\begin{equation}\label{F-2.8}
\lambda_1\lambda_2\cdots\lambda_k=\max\{a_Ib_J:I,J\in\cI_d(k),\,\det(V^*W)_{I,J}\ne0\}.
\end{equation}
\end{lemma}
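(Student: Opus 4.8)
The plan is to reduce the claim about the product of the top $k$ eigenvalues of $Z_p$ to the statement of Lemma~\ref{L-2.2} applied not to $A$ and $B$ themselves but to their $k$-th antisymmetric tensor powers $\wedge^k A$ and $\wedge^k B$. The key observation is that $\wedge^k(A^{p/2}B^pA^{p/2}) = (\wedge^k A)^{p/2}(\wedge^k B)^p(\wedge^k A)^{p/2}$, and taking the $1/p$-th power commutes with the antisymmetric tensor power functor, so $\wedge^k Z_p = \bigl((\wedge^k A)^{p/2}(\wedge^k B)^p(\wedge^k A)^{p/2}\bigr)^{1/p}$. Consequently the largest eigenvalue of $\wedge^k Z_p$ is exactly $\prod_{i=1}^k\lambda_i(p)$, and its limit as $p\to\infty$ is $\prod_{i=1}^k\lambda_i$, which exists by Lemma~\ref{L-2.1}.

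Next I would record the spectral data of the tensor powers. The eigenvalues of $\wedge^k A$ are $a_I := \prod_{i\in I}a_i$ for $I\in\cI_d(k)$, with corresponding orthonormal eigenvectors $v_I := v_{i_1}\wedge\cdots\wedge v_{i_k}$ (for $I=\{i_1<\cdots<i_k\}$), and similarly for $\wedge^k B$ with eigenvalues $b_J$ and eigenvectors $w_J$. The unitary diagonalizing matrices are then $\wedge^k V$ and $\wedge^k W$, and the entries of $(\wedge^k V)^*(\wedge^k W)$ are precisely the minors $\bigl\<v_I, w_J\bigr\> = \det(V^*W)_{I,J}$, by the Cauchy--Binet formula. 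Therefore the condition ``$(V^*W)_{ij}\ne 0$'' appearing in Lemma~\ref{L-2.2} becomes, at the level of $\wedge^k$, the condition ``$\det(V^*W)_{I,J}\ne 0$''.

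Applying Lemma~\ref{L-2.2} to the pair $(\wedge^k A, \wedge^k B)$ then gives
$$
\prod_{i=1}^k\lambda_i = \max\{a_Ib_J : I,J\in\cI_d(k),\ \det(V^*W)_{I,J}\ne0\},
$$
which is exactly \eqref{F-2.8}. One small point needing care: Lemma~\ref{L-2.2} is stated for positive semidefinite matrices, and $\wedge^k A$ is positive semidefinite whenever $A$ is, so there is no issue there; also the lemma's conclusion as written does not require invertibility, but the remark preceding Lemma~\ref{L-2.3} guarantees that for each admissible $I$ there is some $J$ with $\det(V^*W)_{I,J}\ne 0$, so the max is over a nonempty set and is positive exactly when $a_I b_J>0$ for some such pair.

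The only genuine obstacle is verifying cleanly that the antisymmetric tensor power intertwines the operations involved: that $\wedge^k$ is multiplicative ($\wedge^k(XY)=(\wedge^kX)(\wedge^kY)$), preserves positive semidefiniteness, commutes with continuous functional calculus (in particular with the $t\mapsto t^{1/p}$ and $t\mapsto t^{p}$ maps on nonnegative reals), and sends the eigenvalue list of $Z_p$ to the list of $k$-fold products. These are all standard facts about the functor $\wedge^k$, so once they are invoked the proof is essentially a two-line deduction; I would state them as a lemma or cite a standard reference (e.g.\ Bhatia) rather than reprove them. A secondary point is to make sure the limit and the maximum interchange correctly, but since $\prod_{i=1}^k\lambda_i(p)$ is the top eigenvalue of a family of matrices to which Lemma~\ref{L-2.2} directly applies, its limit is computed by that lemma with no interchange argument needed.
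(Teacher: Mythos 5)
Your proposal is correct and follows essentially the same route as the paper: diagonalize $A^{\wedge k}$ and $B^{\wedge k}$, note that the entries of $V^{*\wedge k}W^{\wedge k}$ are the minors $\det(V^*W)_{I,J}$, identify $\prod_{i=1}^k\lambda_i(p)$ as the top eigenvalue of $Z_p^{\wedge k}$, and apply Lemma~\ref{L-2.2} to the tensor powers. The standard multiplicativity and functional-calculus facts about $\wedge^k$ that you flag are exactly what the paper invokes (citing Bhatia), so no gap remains.
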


\begin{proof}
For each $k=1,\dots,d$ the antisymmetric tensor powers $A^{\wedge k}$ and $B^{\wedge k}$
(see \cite{Bh1}) are given in the form of diagonalizations as
$$
A^{\wedge k}=V^{\wedge k}\diag(a_I)_{I\in\cI_d(k)}V^{\wedge k},\qquad
B^{\wedge k}=W^{\wedge k}\diag(b_I)_{I\in\cI_d(k)}W^{\wedge k},
$$
and the corresponding representation of the ${n\choose k}\times{n\choose k}$ unitary
matrix $V^{*\wedge k}W^{\wedge k}$ is given by
$$
(V^{*\wedge k}W^{\wedge k})_{I,J}=\det(V^*W)_{I,J},\qquad I,J\in\cI_d(k).
$$
Note that the largest eigenvalue of
$$
\bigl((A^{\wedge k})^{p/2}(B^{\wedge k})^p(A^{\wedge k})^{p/2}\bigr)^{1/p}
=\bigl((A^{p/2}B^pA^{p/2})^{1/p}\bigr)^{\wedge k}
$$
is $\lambda_1(p)\lambda_2(p)\cdots\lambda_k(p)$, whose limit as $p\to\infty$ is
$\lambda_1\lambda_2\cdots\lambda_k$ by Lemma \ref{L-2.1}. Apply Lemma \ref{L-2.2} to
$A^{\wedge k}$ and $B^{\wedge k}$ to obtain expression \eqref{F-2.8}.
\end{proof}

Let $\cH$ be a $d$-dimensional Hilbert space (say, $\bC^d$), $k$ be an integer with
$1\le k\le d$, and $\cH^{\wedge k}$ be the $k$-fold antisymmetric tensor of $\cH$. We
write $x_1\wedge\cdots\wedge x_k$ ($\in\cH^{\wedge k}$) for the antisymmetric tensor of
$x_1,\dots,x_k\in\cH$ (see \cite{Bh1}). The next lemma says that the Grassmannian
manifold $G(k,d)$ is realized in the projective space of $\cH^{\wedge k}$. Although the
lemma might be known to specialists, we cannot find a precise explanation in the literature.
So, for the convenience of the reader, we will present its sketchy proof in Appendix A
based on \cite{FGP}.

\begin{lemma}\label{L-2.4}
There are constants $\alpha,\beta>0$ (depending on only $d$ and $k$) such that
$$
\alpha\|P-Q\|\le\inf_{\theta\in\bR}
\|u_1\wedge\cdots\wedge u_k-e^{\sqrt{-1}\theta}v_1\wedge\cdots\wedge v_k\|
\le\beta\|P-Q\|
$$
for all orthonormal sets $\{u_1,\dots,u_k\}$ and $\{v_1,\dots,v_k\}$ and the respective
orthogonal projections $P$ and $Q$ onto $\lin\{u_1,\dots,u_k\}$ and $\lin\{v_1,\dots,v_k\}$,
where $\|P-Q\|$ is the operator norm of $P-Q$ and $\|\cdot\|$ inside infimum is the norm on
$\cH^{\wedge k}$.
\end{lemma}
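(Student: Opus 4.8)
The plan is to reduce the statement to an elementary inequality about the principal angles between the subspaces $U:=\lin\{u_1,\dots,u_k\}$ (the range of $P$) and $W:=\lin\{v_1,\dots,v_k\}$ (the range of $Q$). First I would record the basic identity for the inner product on $\cH^{\wedge k}$,
$$
\<x_1\wedge\cdots\wedge x_k,\,y_1\wedge\cdots\wedge y_k\>=\det\bigl[\<x_i,y_j\>\bigr]_{i,j=1}^k ,
$$
which shows that $u_1\wedge\cdots\wedge u_k$ and $v_1\wedge\cdots\wedge v_k$ are unit vectors and that replacing $\{u_1,\dots,u_k\}$ by another orthonormal basis of $U$ multiplies $u_1\wedge\cdots\wedge u_k$ by $\det T$ for some $k\times k$ unitary $T$, hence by a unimodular scalar (similarly for $W$). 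Consequently $\inf_{\theta\in\bR}\|u_1\wedge\cdots\wedge u_k-e^{\sqrt{-1}\theta}v_1\wedge\cdots\wedge v_k\|$ depends only on $U$ and $W$, i.e.\ only on $P$ and $Q$, so both sides of the claimed inequality are functions of $(P,Q)$ alone and the statement is well posed.

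Next I would simplify the middle quantity. For unit vectors $x,y$ in any Hilbert space one has $\inf_{\theta\in\bR}\|x-e^{\sqrt{-1}\theta}y\|^2=2-2|\<x,y\>|$; taking $x=u_1\wedge\cdots\wedge u_k$ and $y=v_1\wedge\cdots\wedge v_k$ gives $|\<x,y\>|=|\det M|$ with $M:=[\<u_i,v_j\>]_{i,j=1}^k$. By the classical principal-angle calculus (see \cite{Bh1,FGP}), the singular values of $M$ are $\cos\theta_1\ge\cdots\ge\cos\theta_k$, where $0\le\theta_1\le\cdots\le\theta_k\le\pi/2$ are the principal angles between $U$ and $W$, so that $|\det M|=\prod_{j=1}^k\cos\theta_j$; and, since $\dim U=\dim W=k$, one has $\|P-Q\|=\max_{1\le j\le k}\sin\theta_j=\sin\theta_k$ (the eigenvalues of $P-Q$ being $\pm\sin\theta_j$ together with zeros). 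The lemma is thereby reduced to the scalar statement that there are $\alpha,\beta>0$ depending only on $k$ with
$$
\alpha^2\sin^2\theta_k\ \le\ 2\Bigl(1-\prod_{j=1}^k\cos\theta_j\Bigr)\ \le\ \beta^2\sin^2\theta_k
$$
for all $0\le\theta_1\le\cdots\le\theta_k\le\pi/2$.

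This last inequality I would settle by hand. Put $s:=\sin^2\theta_k$ and $s_j:=\sin^2\theta_j\in[0,s]$. For the lower bound, since every factor lies in $[0,1]$ we have $\prod_{j=1}^k\cos\theta_j\le\cos\theta_k=\sqrt{1-s}$, and $1-\sqrt{1-s}=s/(1+\sqrt{1-s})\ge s/2$, so $2\bigl(1-\prod_{j=1}^k\cos\theta_j\bigr)\ge 2(1-\sqrt{1-s})\ge s$; thus $\alpha=1$ works. For the upper bound, using $1-\prod_{j}a_j\le\sum_{j}(1-a_j)$ for $a_j\in[0,1]$ together with $1-\cos\theta_j=1-\sqrt{1-s_j}\le s_j\le s$, we get $1-\prod_{j=1}^k\cos\theta_j\le\sum_{j=1}^k s_j\le ks$, so the middle term is at most $2ks=2k\sin^2\theta_k$; thus $\beta=\sqrt{2k}$ works.

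The only genuinely non-elementary input — and the step I would be most careful with — is the identification invoked above: that the singular values of $M=[\<u_i,v_j\>]$ are the cosines of the principal angles and that $\|P-Q\|=\sin\theta_k$. Both are standard facts of the Jordan/Davis--Kahan principal-angle theory; for a self-contained account one can either cite \cite{Bh1,FGP} or prove them directly by bringing the pair of orthonormal systems $\{u_i\}$ and $\{v_j\}$ into canonical ``CS'' form, which block-diagonalizes $M$ and $P-Q$ simultaneously into $1\times1$ and $2\times2$ pieces. Everything else is formal multilinear algebra and one-variable calculus; note in particular that this argument also yields explicit constants, so no compactness or dimension-counting on the Grassmannian $G(k,d)$ is actually needed.
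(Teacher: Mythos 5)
Your proof is correct, and it takes a genuinely different route from the paper's. The paper's Appendix A proceeds topologically: it shows, using the commutative diagram through $\cO_{k,d}$ and the quotient‐topology characterizations from \cite{FGP}, that the canonical map $\phi$ from the Grassmannian $(G(k,d),d_{\gap})$ to the projectivised decomposable $k$-vectors with the metric $\widetilde d$ is a homeomorphism, and then invokes compactness and unitary homogeneity to extract the two‐sided constants $\alpha,\beta$. That argument is soft and non‐constructive (indeed, the step from ``homeomorphism of compact homogeneous spaces'' to a bi‐Lipschitz bound is the only place any real work is hidden). You instead reduce everything to a scalar inequality: $\inf_\theta\|x-e^{\sqrt{-1}\theta}y\|^2=2-2|\<x,y\>|$ for unit $x,y$ turns the middle quantity into $\sqrt{2(1-\prod_j\cos\theta_j)}$, principal‐angle/CS theory identifies $\|P-Q\|=\sin\theta_k$, and elementary estimates then give the explicit constants $\alpha=1$ and $\beta=\sqrt{2k}$. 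Your approach is cleaner, self‐contained modulo the standard CS decomposition, and yields quantitative constants that the paper's compactness argument cannot, so it is an improvement in precision even though both proofs rest on the same geometric picture. One small point worth making explicit in a write‐up: the identity $\|P-Q\|=\sin\theta_k$ uses $\dim\mathrm{ran}\,P=\dim\mathrm{ran}\,Q$ so that the eigenvalues of $P-Q$ come in $\pm\sin\theta_j$ pairs (together with zeros); this is exactly the equal‐dimension case here, but it is the place a careless reader could stumble.
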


The main result of the paper is the next theorem showing the existence of limit for the
anti version of \eqref{F-1.1}.

\begin{thm}\label{T-2.5}
For every $d\times d$ positive semidefinite matrices $A$ and $B$ the matrix $Z_p$ in
\eqref{F-2.3} converges as $p\to\infty$ to a positive semidefinite matrix.
\end{thm}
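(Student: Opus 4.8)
The plan is to upgrade the convergence of the eigenvalues of $Z_p$, already available from Lemma~\ref{L-2.1}, to convergence of the whole matrix, by showing in addition that the spectral projections onto the groups of eigenvalues sharing a common limit converge; the latter will be extracted by passing to the antisymmetric tensor powers and appealing to Lemma~\ref{L-2.4}.

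Write $\mu_1>\cdots>\mu_r\ge0$ for the distinct values among the limits $\lambda_1\ge\cdots\ge\lambda_d$ of Lemma~\ref{L-2.1}, with multiplicities $m_1,\dots,m_r$, and put $k_s:=m_1+\cdots+m_s$ (so $k_0=0$ and $k_r=d$). For all large $p$ the eigenvalues of $Z_p$ fall into $r$ groups, the $s$-th group being $\lambda_{k_{s-1}+1}(p),\dots,\lambda_{k_s}(p)$, all converging to $\mu_s$, and consecutive groups stay a fixed distance apart. Let $Q_k(p)$ be the spectral projection of $Z_p$ onto its $k$ largest eigenvalues (well defined for large $p$ when $k=k_s$) and set $P_s(p):=Q_{k_s}(p)-Q_{k_{s-1}}(p)$. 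The Hermitian matrix $Z_p-\sum_s\mu_sP_s(p)$ has eigenvalues $\lambda_i(p)-\mu_{s(i)}$, where $s(i)$ denotes the group of the index $i$, so its norm is $\max_i|\lambda_i(p)-\mu_{s(i)}|\to0$; hence it is enough to prove that each $Q_{k_s}(p)$ converges, for then $P_s(p)\to P_s$ and $Z_p\to\sum_s\mu_sP_s\ge0$. Since $Q_d(p)=I$, it remains to treat a ``jump index'' $k:=k_s<d$, for which $\mu_s>\mu_{s+1}\ge0$ and thus $\mu_s>0$.

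For such a $k$ I would lift the problem to $\cH^{\wedge k}$. Let $q_1(p),\dots,q_k(p)$ be unit eigenvectors of $Z_p$ for $\lambda_1(p),\dots,\lambda_k(p)$; then $\xi_k(p):=q_1(p)\wedge\cdots\wedge q_k(p)$ is, up to a unimodular factor, the top eigenvector of $Z_p^{\wedge k}=(C^{p/2}D^pC^{p/2})^{1/p}$ with $C:=A^{\wedge k}$, $D:=B^{\wedge k}$, equivalently the top left singular vector of $C^{p/2}D^{p/2}$; by Lemma~\ref{L-2.4}, convergence of $Q_k(p)$ is equivalent to convergence of the line $[\xi_k(p)]$ in the projective space of $\cH^{\wedge k}$. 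Diagonalize $C=\sum_ac_ag_ag_a^*$ and $D=\sum_bd_bh_bh_b^*$, and set $\nu:=\max\{c_ad_b:g_a^*h_b\ne0\}$, which equals $\lambda_1\cdots\lambda_k>0$ by Lemma~\ref{L-2.2} applied to $C,D$. Then
\[
\nu^{-p/2}C^{p/2}D^{p/2}=\sum_{a,b}\Bigl(\frac{c_ad_b}{\nu}\Bigr)^{p/2}(g_a^*h_b)\,g_ah_b^*\ \longrightarrow\ L:=\sum_{(a,b):\,c_ad_b=\nu}(g_a^*h_b)\,g_ah_b^*\neq0,
\]
the terms with $c_ad_b<\nu$ decaying exponentially in $p$. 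The singular values of the left-hand side are exactly $(\Lambda_i(p)/\nu)^{p/2}$, where $\Lambda_1(p)\ge\Lambda_2(p)\ge\cdots$ are the eigenvalues of $Z_p^{\wedge k}$; here $\Lambda_1(p)=\lambda_1(p)\cdots\lambda_k(p)\to\lambda_1\cdots\lambda_k=\nu$, while any $k$ of the $\lambda_i(p)$ other than the top $k$ have product at most $(\lambda_{k+1}(p)/\lambda_k(p))\Lambda_1(p)$, so $\Lambda_2(p)/\nu$ is eventually bounded above by a constant $<1$ because $\lambda_{k+1}(p)/\lambda_k(p)\to\mu_{s+1}/\mu_s<1$. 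Hence $(\Lambda_2(p)/\nu)^{p/2}\to0$, the second singular value of $L$ vanishes, $L$ has rank one, and its top left singular vector $\ell$ is unique up to a phase; therefore $\xi_k(p)\to\ell$ up to a phase. Since $\xi_k(p)$ is decomposable and the decomposable unit vectors form a closed subset of $\cH^{\wedge k}$, the limit is again decomposable, $\ell=\ell_1\wedge\cdots\wedge\ell_k$ for an orthonormal set $\{\ell_1,\dots,\ell_k\}$, and Lemma~\ref{L-2.4} gives $\|Q_k(p)-Q_k\|\le\alpha^{-1}\inf_\theta\|\xi_k(p)-e^{\sqrt{-1}\theta}\ell\|\to0$, where $Q_k$ is the projection onto $\lin\{\ell_1,\dots,\ell_k\}$. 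This settles the core step and with it the theorem.

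The step I expect to be the genuine obstacle is exactly this one: one must ensure that the top eigenvalue of the wedged matrix $Z_p^{\wedge k}$ is asymptotically simple and separated from the rest of its spectrum, which is false for a generic $k$ and is rescued only by wedging at a jump index $k=k_s$ --- this separation is precisely what forces the rescaled limit $L$ to have rank one, thereby pinning down $\lim[\xi_k(p)]$ and, via Lemma~\ref{L-2.4}, the subspace $Q_k(p)$. The remaining points --- accidental coincidences among the $\lambda_i(p)$ at finite $p$, the phase ambiguity in the convergence of singular vectors, and the closedness of the Grassmann cone used to see that $\ell$ is again decomposable --- are routine.
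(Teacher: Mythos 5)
Your proposal is correct and follows essentially the same route as the paper: convergence of the spectral projections at the jump indices is obtained by passing to $Z_p^{\wedge k}$, using the strict gap $\mu_{s+1}/\mu_s<1$ to make the top eigenvalue of $Z_p^{\wedge k}$ asymptotically simple, identifying a rank-one limit of the suitably rescaled matrix, and converting convergence (up to phase) of the top wedge eigenvector into convergence of the projection via Lemma~\ref{L-2.4}. The only cosmetic difference is that you work with the singular value decomposition of $\nu^{-p/2}(A^{\wedge k})^{p/2}(B^{\wedge k})^{p/2}$, whereas the paper computes the limit of $\eta_k^{-p}(Z_p^{\wedge k})^p$, which is precisely the associated Gram matrix.
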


\begin{proof}
By replacing $A$ and $B$ with $VAV^*$ and $VBV^*$, respectively, we may assume that $V=I$
and so
$$
A=\diag(a_1,\dots,a_d),\qquad B=W\diag(b_1,\dots,b_d)W^*.
$$
Choose an orthonormal basis $\{u_1(p),\dots,u_d(p)\}$ of $\bC^d$ for which we have
$Z_pu_i(p)=\lambda_i(p)u_i(p)$ for $1\le i\le d$. Let $\lambda_i$ be given in Lemma
\ref{L-2.1}, and assume that $1\le k<d$ and $\lambda_1\ge\dots\ge\lambda_k>\lambda_{k+1}$.
Moreover, let $\lambda_1(Z_p^{\wedge k})\ge\lambda_2(Z_p^{\wedge k})\ge\dots$ be the
eigenvalues of $Z_p^{\wedge k}$ in decreasing order. We note that
\begin{align}
\lim_{p\to\infty}\lambda_1(Z_p^{\wedge k})
&=\lim_{p\to\infty}\lambda_1(p)\cdots\lambda_{k-1}(p)\lambda_k(p) \nonumber\\
&=\lambda_1\dots\lambda_{k-1}\lambda_k \nonumber\\
&>\lambda_1\cdots\lambda_{k-1}\lambda_{k+1}
=\lim_{p\to\infty}\lambda_2(Z_p^{\wedge k}). \label{F-2.9}
\end{align}
Hence it follows that $\lambda_1(Z_p^{\wedge k})$ is a simple eigenvalue of
$Z_p^{\wedge k}$ for every $p$ sufficiently large. Letting $w_{I,J}:=\det W_{I,J}$ for
$I,J\in\cI_d(k)$ we compute
\begin{align*}
(Z_p^{\wedge k})^p
&=(A^{\wedge k})^{p/2}W^{\wedge k}((\diag(b_1,\dots,b_d))^{\wedge k})^p
(W^{\wedge k})^*(A^{\wedge k})^{p/2} \\
&=\diag(a_I^{p/2})_I\bigl[w_{I,J}\bigr]_{I,J}\diag(b_I^p)_I
\bigl[\overline w_{J,I}\bigr]_{I,J}\diag(a_I^{p/2})_I \\
&=\left[\sum_{K\in\cI_d(k)}w_{I,K}\overline w_{J,K}
a_I^{p/2}a_J^{p/2}b_K^p\right]_{I,J} \\
&=\eta_k^p\left[\sum_{K\in\cI_d(k)}w_{I,K}\overline w_{J,K}
\Biggl({a_I^{1/2}a_J^{1/2}b_K\over\eta_k}\Biggr)^p\right]_{I,J},
\end{align*}
where $\eta_k:=\lambda_1\lambda_2\cdots\lambda_k>0$ so that
$$
\eta_k=\max\{a_Ib_K:I,K\in\cI_d(k),\,w_{I,K}\ne0\}
$$
due to Lemma \ref{L-2.3}. We now define
$$
\Delta_k:=\bigl\{(I,K)\in\cI_d(k)^2:w_{I,K}\ne0\ \mbox{and}\ a_Ib_K=\eta_k\bigr\}.
$$
Then we have
\begin{align*}
\biggl({Z_p^{\wedge k}\over\eta_k}\biggr)^p
&=\left[\sum_{K\in\cI_d(k)}w_{I,K}\overline w_{J,K}
\Biggl({a_I^{1/2}a_J^{1/2}b_K\over\eta_k}\Biggr)^p\right]_{I,J} \\
&\longrightarrow
Q:=\left[\sum_{K\in\cI_d(k)}w_{I,K}\overline w_{J,K}\delta_{I,J,K}\right]_{I,J},
\end{align*}
where
$$
\delta_{I,J,K}:=\begin{cases}1 & \text{if $(I,K),(J,K)\in\Delta_k$}, \\
0 & \text{otherwise}.
\end{cases}
$$
Since $Q_{I,I}\ge|w_{I,K}|^2>0$ when $(I,K)\in\Delta_k$, note that $Q\ne0$. Furthermore,
since the eigenvalue $\lambda_1(Z_p^{\wedge k})$ is simple (if $p$ large), it follows
from \eqref{F-2.9} that the limit $Q$ of $\bigl(Z_p^{\wedge k}/\eta_k\bigr)^p$ must be
a rank one projection $\psi\psi^*$ up to a positive scalar multiple, where $\psi$ is a
unit vector in $(\bC^d)^{\wedge k}$. Since the unit eigenvector
$u_1(p)\wedge\dots\wedge u_k(p)$ of $Z_p^{\wedge k}$ corresponding to the largest
(simple) eigenvalue coincides with that of $\bigl(Z_p^{\wedge k}/\eta_k\bigr)^p$, we
conclude that $u_1(p)\wedge\dots\wedge u_k(p)$ converges $\psi$ up to a scalar multiple
$e^{\sqrt{-1}\theta}$. Therefore, by Lemma \ref{L-2.4} the orthogonal projection onto
$\lin\{u_1(p),\dots,u_k(p)\}$ converges as $p\to\infty$.

Assume now that
$$
\lambda_1=\dots=\lambda_{k_1}>\lambda_{k_1+1}=\dots=\lambda_{k_2}
>\dots>\lambda_{k_{s-1}+1}=\dots=\lambda_{k_s}\quad(k_s=d).
$$
From the fact proved above, the orthogonal projection onto
$\lin\{u_1(p),\dots,u_{k_r}(p)\}$ converges for any $r=1,\dots,s-1$, and this is trivial
for $r=s$. Therefore, the orthogonal projection onto
$\lin\{u_{k_{r-1}+1}(p),\dots,u_{k_r}(p)\}$ converges to a projection $P_r$ for any
$r=1,\dots,s$, and thus $Z_p$ converges to $\sum_{r=1}^s\lambda_{k_r}P_r$.
\end{proof}

For $1\le k\le d$ define $\eta_k$ by the right-hand side of \eqref{F-2.8}. Then
Lemma \ref{L-2.3} (see also the proof of Lemma \ref{L-2.1}) implies that, for
$k=1,\dots,d$,
$$
\lambda_k={\eta_k\over\eta_{k-1}}\quad\mbox{if}\quad\eta_k>0
$$
(where $\eta_0:=1$), and $\lambda_k=0$ if $\eta_k=0$. So one can effectively compute the
eigenvalues of $Z:=\lim_{p\to\infty}Z_p$; however, it does not seem that there is a simple
algebraic method to compute the limit matrix $Z$.
\section{The maximal case \label{sec3}}

Let $A$ and $B$ be $d\times d$ positive semidefinite matrices with diagonalizations
\eqref{F-2.1} and \eqref{F-2.2}. For each $d\times d$ matrix $X$ we write
$s_1(X)\ge s_2(X)\ge\dots\ge s_d(X)$ for the singular values of $X$ in decreasing order
with multiplicities. For each $p>0$ and $k=1,\dots,d$, since
$\prod_{i=1}^k\lambda_i(p)=\bigl(\prod_{i=1}^ks_i(A^{p/2}B^{p/2})\bigr)^{2/p}$, by the
majorization results of Gel'fand and Naimark and of Horn (see, e.g., \cite{MOA,Bh1,Hi2}),
we have
$$
\prod_{j=1}^ka_{i_j}b_{n+1-i_j}\le\prod_{j=1}^k\lambda_j(p)\le\prod_{j=1}^ka_jb_j
$$
for any choice of $1\le i_1<i_2<\dots<i_k\le d$, and for $k=d$
$$
\prod_{i=1}^d\lambda_i(p)=\det A\cdot\det B=\prod_{i=1}^da_ib_i.
$$
That is, for any $p>0$,
\begin{equation}\label{F-3.1}
(a_ib_{n+1-i})_{i=1}^d\prec_{(\log)}(\lambda_i(p))_{i=1}^d
\prec_{(\log)}(a_ib_i)_{i=1}^d
\end{equation}
with the notation of log-majorization, see \cite{AH}. Letting $p\to\infty$ gives
\begin{equation}\label{F-3.2}
(a_ib_{n+1-i})_{i=1}^d\prec_{(\log)}(\lambda_i)_{i=1}^d\prec_{(\log)}(a_ib_i)_{i=1}^d
\end{equation}
for the eigenvalues $\lambda_1\ge\dots\ge\lambda_d$ of $Z=\lim_{p\to\infty}Z_p$. In general,
we have nothing to say about the position of $(\lambda_i)_{i=1}^d$ in \eqref{F-3.2}. For
instance, when $V^*W$ becomes the permutation matrix corresponding to a permutation
$(j_1,\dots,j_d)$ of $(1,\dots,d)$, we have $Z_p=V\diag(a_1b_{j_1},\dots,a_db_{j_d})V^*$
independently of $p>0$ so that $(\lambda_i)=(a_ib_{j_i})$.

In this section we clarify the case when $(\lambda_i)_{i=1}^d$ is equal to
$(a_ib_i)_{i=1}^d$, the maximal case in the log-majorization order in \eqref{F-3.2}. To do
this, let $0=i_0<i_1<\cdots<i_{l-1}<i_l=d$ and $0=j_0<j_1<\cdots<j_{m-1}<j_m=d$ be taken
so that
\begin{align*}
&a_1=\dots=a_{i_1}>a_{i_1+1}=\dots=a_{i_2}>\dots>a_{i_{l-1}+1}=\dots=a_{i_l}, \\
&b_1=\dots=b_{j_1}>b_{j_1+1}=\dots=b_{j_2}>\dots>b_{j_{m-1}+1}=\dots=b_{j_m}.
\end{align*}

\begin{thm}\label{T-3.1}
In the above situation the following conditions are equivalent:
\begin{itemize}
\item[(i)] $\lambda_i=a_ib_i$ for all $i=1,\dots,d$;
\item[(ii)] for every $k=1,\dots,d$ so that $i_{r-1}<k\le i_r$ and $j_{s-1}<k\le j_s$,
there are $I_k,J_k\in\cI_d(k)$ such that
$$
\{1,\dots,i_{r-1}\}\subset I_k\subset\{1,\dots,i_r\},\qquad
\{1,\dots,j_{s-1}\}\subset J_k\subset\{1,\dots,j_s\},
$$
$$
\det(V^*W)_{I_k,J_k}\ne0;
$$
\item[(iii)] the property in (ii) holds for every
$k\in\{i_1,\dots,i_{l-1},j_1,\dots,j_{m-1}\}$.
\end{itemize}
\end{thm}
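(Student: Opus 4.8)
The plan is to prove the three conditions equivalent by establishing $(\mathrm{i})\Leftrightarrow(\mathrm{ii})$, then the trivial implication $(\mathrm{ii})\Rightarrow(\mathrm{iii})$, and finally the substantive implication $(\mathrm{iii})\Rightarrow(\mathrm{ii})$. For $(\mathrm{i})\Leftrightarrow(\mathrm{ii})$ I would first note, using the recovery of $\lambda_k$ from $\eta_k:=\lambda_1\cdots\lambda_k$ described just after Theorem~\ref{T-2.5}, that $(\mathrm{i})$ is equivalent to the equalities $\eta_k=a_1\cdots a_k\,b_1\cdots b_k$ for all $k=1,\dots,d$ (with $\eta_0:=1$). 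By Lemma~\ref{L-2.3} one has $\eta_k=\max\{a_Ib_J:I,J\in\cI_d(k),\ \det(V^*W)_{I,J}\ne0\}$, whereas trivially $a_Ib_J\le(a_1\cdots a_k)(b_1\cdots b_k)$ with equality exactly when $a_I=a_1\cdots a_k$ and $b_J=b_1\cdots b_k$; and since $a_I$ is a product of $k$ of the $a_i$, the equality $a_I=a_1\cdots a_k$ holds precisely when $I$ is $a$-saturated at level $k$, i.e.\ $\{1,\dots,i_{r-1}\}\subset I\subset\{1,\dots,i_r\}$ for the index $r$ with $i_{r-1}<k\le i_r$, and similarly for $b_J$ and $J$. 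Combining these three observations, $\eta_k=(a_1\cdots a_k)(b_1\cdots b_k)$ holds iff there exist $I_k,J_k$ as in $(\mathrm{ii})$, which gives $(\mathrm{i})\Leftrightarrow(\mathrm{ii})$. (If $A$ or $B$ is singular one argues slightly differently at the finitely many levels $k$ with $a_1\cdots a_k=0$ or $b_1\cdots b_k=0$; for the main line one may assume $A,B$ nonsingular.)

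The implication $(\mathrm{ii})\Rightarrow(\mathrm{iii})$ needs nothing, as $(\mathrm{iii})$ is exactly the property of $(\mathrm{ii})$ restricted to $k\in\{i_1,\dots,i_{l-1},j_1,\dots,j_{m-1}\}$. The heart of the matter is $(\mathrm{iii})\Rightarrow(\mathrm{ii})$, and for this I would use the elementary fact that if a matrix $N$ has an invertible $t\times t$ submatrix $N_{S,T}$ and $\mathrm{rank}\,N>t$, then $N_{S\cup\{s_0\},T\cup\{t_0\}}$ is again invertible for suitable $s_0\notin S$ and $t_0\notin T$ (choose $s_0$ so that the rows indexed by $S\cup\{s_0\}$ are linearly independent, then $t_0$ so that the $t+1$ columns indexed by $T\cup\{t_0\}$ have full rank on those rows). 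Fix a level $k$ that is neither $d$ nor one of $i_1,\dots,i_{l-1},j_1,\dots,j_{m-1}$ (otherwise the property of $(\mathrm{ii})$ holds by $(\mathrm{iii})$ or trivially with $I_d=J_d=\{1,\dots,d\}$), and let $r,s$ be as in $(\mathrm{ii})$. Let $k_-$ be the largest, and $k_+$ the smallest, element of $\{0,d\}\cup\{i_1,\dots,i_{l-1},j_1,\dots,j_{m-1}\}$ that is $<k$, resp.\ $\ge k$. Then $k_-<k<k_+$, no jump of either eigenvalue list lies strictly between $k_-$ and $k_+$, and consequently $i_{r-1}\le k_-<k_+\le i_r$ and $j_{s-1}\le k_-<k_+\le j_s$. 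By $(\mathrm{iii})$ (together with the trivial cases $k_-=0$ and $k_+=d$) the property of $(\mathrm{ii})$ holds at $k_-$ and at $k_+$, and a short case distinction according to whether $k_\pm$ is an $a$-jump, a $b$-jump, both, or $0$ or $d$ shows that the associated pairs in fact satisfy $\{1,\dots,i_{r-1}\}\subset I_{k_-},I_{k_+}\subset\{1,\dots,i_r\}$ and $\{1,\dots,j_{s-1}\}\subset J_{k_-},J_{k_+}\subset\{1,\dots,j_s\}$, i.e.\ they lie in the ``slab'' attached to level $k$.

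Now I would apply the extension fact to the submatrix $N$ of $V^*W$ with row indices in $\{1,\dots,i_r\}$ and column indices in $\{1,\dots,j_s\}$: since $N$ contains the invertible $k_+\times k_+$ submatrix indexed by $(I_{k_+},J_{k_+})$ one has $\mathrm{rank}\,N\ge k_+>k$, and $N$ also contains the invertible $k_-\times k_-$ submatrix indexed by $(I_{k_-},J_{k_-})$. Enlarging this submatrix one row and one column at a time — legitimate at each step because the current size stays below $k_+\le\mathrm{rank}\,N$, and automatically keeping row indices in $\{1,\dots,i_r\}$ and column indices in $\{1,\dots,j_s\}$ — we reach, at size $k$, a pair $(I_k,J_k)$ with $\{1,\dots,i_{r-1}\}\subset I_{k_-}\subset I_k\subset\{1,\dots,i_r\}$, $\{1,\dots,j_{s-1}\}\subset J_{k_-}\subset J_k\subset\{1,\dots,j_s\}$ and $\det(V^*W)_{I_k,J_k}\ne0$, which is the property of $(\mathrm{ii})$ at $k$. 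I expect the extension step and the equivalence $(\mathrm{i})\Leftrightarrow(\mathrm{ii})$ to be routine; the main obstacle is the combinatorial bookkeeping in $(\mathrm{iii})\Rightarrow(\mathrm{ii})$ — pinning down $k_\pm$, checking that both block structures are ``constant'' between them, and above all verifying through the sub-cases that the pairs supplied by $(\mathrm{iii})$ at $k_\pm$ genuinely land in the slab of level $k$. The only other delicate point is the singular case in the first step, where the levels with vanishing $a_1\cdots a_k$ or $b_1\cdots b_k$ must be treated separately, again by the same chaining but now possibly across several blocks.
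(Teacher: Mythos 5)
Your proof is correct but takes a genuinely different route in the key step from (iii). The paper proves (iii) $\Rightarrow$ (i) directly by eigenvalue arithmetic: from \eqref{F-3.3} and \eqref{F-3.2} one has $\prod_{i=1}^h\lambda_i=\prod_{i=1}^h a_ib_i$ at each jump index $h$ and at $h=d$; then for $k$ with $h_0:=\max\{i_{r-1},j_{s-1}\}<k<h_1:=\min\{i_r,j_s\}$ the constancy of $a_i,b_i$ on $(h_0,h_1]$ together with the upper log-majorization bound $\prod_{i=1}^{h_0+1}\lambda_i\le\prod_{i=1}^{h_0+1}a_ib_i$ forces $\lambda_{h_0+1}=\dots=\lambda_{h_1}=a_{h_1}b_{h_1}$. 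You instead prove (iii) $\Rightarrow$ (ii) by a linear-algebraic rank argument on the block $N:=(V^*W)_{\{1,\dots,i_r\},\{1,\dots,j_s\}}$: the invertible $k_+\times k_+$ minor supplied by (iii) gives $\mathrm{rank}\,N\ge k_+>k$, and the one-step minor extension lemma grows the invertible $k_-\times k_-$ minor to size $k$ while staying inside $N$, hence inside the slab. Both routes are valid, and yours has the small advantage of avoiding division by $\prod_{i=1}^{h_0}\lambda_i$, whose positivity the paper's proof asserts but which can in fact vanish (harmlessly — the target identity at level $k$ is then trivially $0=0$, but the paper leaves this unsaid). The case distinctions you defer do work out: when $k_\pm$ is an $a$-jump the set $I_{k_\pm}$ is forced by cardinality to be the full interval $\{1,\dots,k_\pm\}\supset\{1,\dots,i_{r-1}\}$; when $k_\pm$ is only a $b$-jump, or $0$, or $d$, it lies in the same $a$-block as $k$ so the slab constraints coincide, and symmetrically for $J$. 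The singular case you flag in (i) $\Leftrightarrow$ (ii) is likewise left tacit in the paper.
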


\begin{proof}
(i) $\Leftrightarrow$ (ii).\enspace
By Lemma \ref{L-2.3} condition (ii) means that
$$
\prod_{i=1}^k\lambda_i=\prod_{i=1}^ka_ib_i,\qquad k=1,\dots,d.
$$
It follows (see the proof of Lemma \ref{L-2.1}) that this is equivalent to (i).

(ii) $\Rightarrow$ (iii) is trivial.

(iii) $\Rightarrow$ (i).\enspace
By Lemma \ref{L-2.3} again condition (iii) means that
\begin{equation}\label{F-3.3}
\prod_{i=1}^h\lambda_i=\prod_{i=1}^ha_ib_i\quad
\mbox{for all}\ h\in\{i_1,\dots,i_{l-1},j_1,\dots,j_{m-1}\}.
\end{equation}
This holds also for $h=d$ thanks to \eqref{F-3.2}. We need to prove that
$\prod_{i=1}^k\lambda_i=\prod_{i=1}^ka_ib_i$ for all $k=1,\dots,d$. Now, let
$i_{r-1}<k\le i_r$ and $j_{s-1}<k\le j_s$ as in condition (ii). If $k=i_r$ or $k=j_s$,
then the conclusion has already been stated in \eqref{F-3.3}. So assume that
$i_{r-1}<k<i_r$ and $j_{s-1}<k<j_s$. Set $h_0:=\max\{i_{r-1},j_{s-1}\}$ and
$h_1:=\min\{i_r,j_s\}$ so that $h_0<k<h_1$. By \eqref{F-3.3} for $h=h_0,h_1$ we have
$$
\prod_{i=1}^{h_0}\lambda_i=\prod_{i=1}^{h_0}a_ib_i>0,\qquad
\prod_{i=1}^{h_1}\lambda_i=\prod_{i=1}^{h_1}a_ib_i.
$$
Since $a_i=a_{h_1}$ and $b_i=b_{h_1}$ for $h_0<i\le h_1$, we have
$\prod_{i=h_0+1}^{h_1}\lambda_i=(a_{h_1}b_{h_1})^{h_1-h_0}$.
By \eqref{F-3.2} we furthermore have
$\prod_{i=1}^{h_0+1}\lambda_i\le\prod_{i=1}^{h_0+1}a_ib_i$ and hence
$$
a_{h_1}b_{h_1}\ge\lambda_{h_0+1}\ge\lambda_{h_0+2}\ge\dots\ge\lambda_{h_1}.
$$
Therefore, $\lambda_i=a_{h_1}b_{h_1}$ for all $i$ with $h_0+1<i\le h_1$, from which
$\prod_{i=1}^k\lambda_i=\prod_{i=1}^ka_ib_i$ follows for $h_0<k<h_1$.
\end{proof}

\begin{prop}\label{P-3.2}
Assume that the equivalent conditions of Theorem \ref{T-3.1} hold. Then, for each
$r=1,\dots,l$, the spectral projection of $Z$ corresponding to the set of eigenvalues
$\{a_{i_{r-1}+1}b_{i_{r-1}+1},\dots,a_{i_r}b_{i_r}\}$ is equal to the spectral projection
$\sum_{i=i_{r-1}+1}^{i_r}v_iv_i^*$ of $A$ corresponding to $a_{i_r}$. Hence $Z$ is of the
form
$$
Z=\sum_{i=1}^da_ib_iu_iu_i^*
$$
for some orthonormal set $\{u_1,\dots,u_d\}$ such that
$\sum_{i=i_{r-1}+1}^{i_r}u_iu_i^*=\sum_{i=i_{r-1}+1}^{i_r}v_iv_i^*$ for $r=1,\dots,l$.
\end{prop}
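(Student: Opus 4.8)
The plan is to identify, for each $r$, the limit as $p\to\infty$ of the subspace $\lin\{u_1(p),\dots,u_{i_r}(p)\}$ by re-running the argument in the proof of Theorem~\ref{T-2.5} at the exponent $k=i_r$. As there, I first conjugate so that $A=\diag(a_1,\dots,a_d)$, $B=W\diag(b_1,\dots,b_d)W^*$, the $v_i$ are the standard basis vectors, and $w_{I,K}=\det W_{I,K}$. Then the whole statement reduces to showing that, for every $r$, the spectral projection $E_r$ of $Z=\lim_pZ_p$ onto its $i_r$ largest eigenvalues equals $\sum_{i=1}^{i_r}v_iv_i^*$. Indeed, granting this, the $r$-th block projection is $E_r-E_{r-1}=\sum_{i=i_{r-1}+1}^{i_r}v_iv_i^*$, the eigenprojection of $A$ for $a_{i_r}$; and since $Z$ has eigenvalues $\{a_ib_i:i_{r-1}<i\le i_r\}=\{a_{i_r}b_i:i_{r-1}<i\le i_r\}$ on the range of $E_r-E_{r-1}$, picking an orthonormal eigenbasis $\{u_{i_{r-1}+1},\dots,u_{i_r}\}$ of $Z$ there yields $Zu_i=a_ib_iu_i$ and $\sum_{i=i_{r-1}+1}^{i_r}u_iu_i^*=\sum_{i=i_{r-1}+1}^{i_r}v_iv_i^*$, which is the assertion (and ordering the global $u_i$ decreasingly gives $Z=\sum_ia_ib_iu_iu_i^*$, since the maximal-case hypothesis says $\lambda_i=a_ib_i$).

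First I would record the spectral gap that makes $E_r$ a genuine limit. When $b_{i_r}>0$ (equivalently $\eta_{i_r}:=\prod_{i\le i_r}a_ib_i>0$, which forces $a_i,b_i>0$ for $i\le i_r$), the relations $a_{i_r}>a_{i_r+1}$ and $b_{i_r}\ge b_{i_r+1}$ give $\lambda_{i_r}=a_{i_r}b_{i_r}>a_{i_r+1}b_{i_r+1}=\lambda_{i_r+1}$, so by \eqref{F-2.9} the top eigenvalue $\lambda_1(Z_p^{\wedge i_r})=\prod_{i\le i_r}\lambda_i(p)$ of $Z_p^{\wedge i_r}$ is simple for all large $p$, and the computation in the proof of Theorem~\ref{T-2.5} shows $\bigl(Z_p^{\wedge i_r}/\eta_{i_r}\bigr)^p$ converges to $Q=\bigl[\sum_K w_{I,K}\overline w_{J,K}\delta_{I,J,K}\bigr]_{I,J}$, a nonzero positive multiple of the rank-one projection onto $\lim_p u_1(p)\wedge\dots\wedge u_{i_r}(p)$.

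The heart of the matter is to identify $Q$. A pair $(I,K)$ lies in $\Delta_{i_r}$ only if $a_Ib_K=\eta_{i_r}=\bigl(\prod_{i\le i_r}a_i\bigr)\bigl(\prod_{i\le i_r}b_i\bigr)$; since $a_I\le\prod_{i\le i_r}a_i$ and $b_K\le\prod_{i\le i_r}b_i$ with both products positive, this forces $a_I=\prod_{i\le i_r}a_i$ and $b_K=\prod_{i\le i_r}b_i$. The decisive point is that, $i_r$ being a block boundary for $A$, the set $\{1,\dots,i_r\}$ is the \emph{unique} $I\in\cI_d(i_r)$ with $a_I=\prod_{i\le i_r}a_i$; hence $\delta_{I,J,K}\ne0$ only for $I=J=\{1,\dots,i_r\}$, so $Q=c\,\psi\psi^*$ with $\psi=v_1\wedge\dots\wedge v_{i_r}$ and $c=Q_{\{1,\dots,i_r\},\{1,\dots,i_r\}}>0$; the positivity of $c$ is guaranteed by condition~(ii) of Theorem~\ref{T-3.1}, which provides a $K$ with $(\{1,\dots,i_r\},K)\in\Delta_{i_r}$, whence $c\ge|w_{\{1,\dots,i_r\},K}|^2>0$. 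Therefore $u_1(p)\wedge\dots\wedge u_{i_r}(p)\to v_1\wedge\dots\wedge v_{i_r}$ up to a phase, and Lemma~\ref{L-2.4} gives $\lin\{u_1(p),\dots,u_{i_r}(p)\}\to\lin\{v_1,\dots,v_{i_r}\}$, i.e.\ $E_r=\sum_{i\le i_r}v_iv_i^*$.

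The only loose end is the boundaries $i_r$ with $b_{i_r}=0$, which occur only when $B$ is singular and the $i_r$-th eigenvalue of $Z$ already vanishes. There I would run the same computation at $k=\mathrm{rank}\,B$ instead (assuming $\mathrm{rank}\,B\le\mathrm{rank}\,A$; otherwise interchange $A$ and $B$): being a $B$-block boundary with $\eta_k>0$, this $k$ makes $\{1,\dots,k\}$ the unique set of maximal $b$-product, so the analogous argument pins down the span of all eigenvectors of $Z$ with positive eigenvalue, squeezed (using the previously established $E_r$'s) between two consecutive spectral subspaces of $A$; the remaining eigenvectors span $\ker Z$ and, all the corresponding $a_ib_i$ being $0$, may be chosen equal to the matching $v_i$, which closes the block-matching. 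I expect this bookkeeping around vanishing eigenvalues to be the only real nuisance; in the nondegenerate part the point to get right is simply that the block-boundary property is what collapses $\Delta_{i_r}$, hence $Q$, onto the single index set $\{1,\dots,i_r\}$.
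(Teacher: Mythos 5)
Your proof is correct and follows essentially the same route as the paper's: reduce to $V=I$, pass to $Z_p^{\wedge i_r}$ where the top eigenvalue becomes simple by \eqref{F-2.9}, and observe that because $i_r$ is a block boundary for $A$ the set $\{1,\dots,i_r\}$ is the unique maximizer of $a_I$, which forces the limit matrix $Q$ to be a rank-one multiple of $(v_1\wedge\dots\wedge v_{i_r})(v_1\wedge\dots\wedge v_{i_r})^*$, after which Lemma~\ref{L-2.4} finishes. Your phrasing via the $\Delta_{i_r}$ set from Theorem~\ref{T-2.5} is a cleaner way of saying what the paper says with the normalized $\alpha_h,\beta_h$ diagonal entries; you are also more explicit than the paper about the degenerate boundaries with $b_{i_r}=0$, where the paper's strict inequality $\prod_{i\le k}a_ib_i>\prod_{i\le k-1}a_ib_i\cdot a_{k+1}b_{k+1}$ silently requires $b_k>0$ and the paper offers no further discussion.
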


\begin{proof}
In addition to Theorem \ref{T-2.5} we may prove that, for each $k\in\{i_1,\dots,i_{l-1}\}$,
the spectral projection of $Z_p$ corresponding to $\{\lambda_1(p),\dots,\lambda_k(p)\}$
converges to $\sum_{i=1}^kv_iv_i^*$. Assume that $k=i_r$ with $1\le r\le l-1$. When
$j_{s-1}<k<j_s$, by condition (iii) of Theorem \ref{T-3.1} we have
$\det(V^*W)_{\{1,\dots,k\},\{1,\dots,j_{s-1},j_s',\dots,j_k'\}}\ne0$ for some
$\{j_s',\dots,j_k'\}\subset\{j_{s-1}+1,\dots,j_s\}$. By exchanging
$w_{j_s'},\dots,w_{j_k'}$ with $w_{j_{s-1}+1},\dots,w_k$ we may assume that
$\det(V^*W)_{\{1,\dots,k\},\{1,\dots,k\}}\ne0$. Furthermore, by replacing $A$ and $B$ with
$VAV^*$ and $VBV^*$, respectively, we may assume that $V=I$. So we end up assuming that
$$
A=\diag(a_1,\dots,a_d),\qquad B=W\diag(b_1,\dots,b_d)W^*,
$$
and $\det W(1,\dots,k)\ne0$, where $W(1,\dots,k)$ denotes the principal $k\times k$
submatrix of the top-left corner. Let $\{e_1,\dots,e_d\}$ be the standard basis of $\bC^d$.
By Theorem \ref{T-3.1} we have
$$
\lim_{p\to\infty}\lambda_1(Z_p^{\wedge k})
=\prod_{i=1}^ka_ib_i
>\prod_{i=1}^{k-1}a_ib_i\cdot a_{k+1}b_{k+1}
=\lim_{p\to\infty}\lambda_2(Z_p^{\wedge k})
$$
so that the largest eigenvalue of $Z_p^{\wedge k}$ is simple for every sufficiently large
$p$. Let $\{u_1(p),\dots,u_d(p)\}$ be an orthonormal basis of $\bC^d$ for which
$Z_pu_i(p)=\lambda_i(p)u_i(p)$ for $1\le i\le d$. Then $u_1(p)\wedge\dots\wedge u_k(p)$
is the unit eigenvector of $Z_p^{\wedge k}$ corresponding to the eigenvalue
$\lambda_1(Z_p^{\wedge k})$. We now show that $u_1(p)\wedge\dots\wedge u_k(p)$ converges
to $e_1\wedge\dots\wedge e_k$ in $(\bC^d)^{\wedge k}$. We observe that
$$
(A^{\wedge k})^{p/2}=\diag\bigl(a_I^{p/2}\bigr)_I
=a_{\{1,\dots,k\}}^{p/2}\diag\biggl(1,\alpha_2^{p/2},\dots,
\alpha_{d\choose k}^{p/2}\biggr)
$$
with respect to the basis $\bigl\{e_{i_1}\wedge\dots\wedge e_{i_k}:
I=\{i_1,\dots,i_k\}\in\cI_d(k)\bigr\}$, where the first diagonal entry $1$
corresponds to $e_1\wedge\dots\wedge e_k$ and $0\le\alpha_h<1$ for $2\le h\le{d\choose k}$.
Similarly,
$$
\bigl((\diag(b_1,\dots,b_d))^{\wedge k}\bigr)^p
=b_{\{1,\dots,k\}}^p\diag\biggl(1,\beta_2^p,\dots,\beta_{d\choose k}^p\biggr),
$$
where $0\le\beta_h\le1$ for $2\le h\le{d\choose k}$. Moreover, $W^{\wedge k}$ is given as
$$
W^{\wedge k}=\bigl[w_{I,J}\bigr]_{I,J}
=\begin{bmatrix}w_{11}&\cdots&w_{1{d\choose k}}\\
\vdots&\ddots&\vdots\\
w_{{d\choose k}1}&\cdots&w_{{d\choose k}{d\choose k}}
\end{bmatrix},
$$
where $w_{I,J}=\det W_{I,J}$ and so $w_{11}=\det W(1,\dots,k)\ne0$. As in the proof of
Theorem \ref{T-2.5} we now compute
\begin{align*}
(Z_p^{\wedge k})^p
&=(A^{\wedge k})^{p/2}W^{\wedge k}\bigl((\diag(b_1,\dots,b_d))^{\wedge k}\bigr)^p
(W^{\wedge k})^*(A^{\wedge k})^{p/2} \\
&=\bigl(a_{\{1,\dots,k\}}b_{\{1,\dots,k\}}\bigr)^p
\left[\sum_{h=1}^{d\choose k}w_{ih}\overline w_{jh}
\alpha_i^{p/2}\alpha_j^{p/2}\beta_h^p\right]_{i,j=1}^{d\choose k},
\end{align*}
where $\alpha_1=\beta_1=1$. As $p\to\infty$ we have
$$
\left[\sum_{h=1}^{d\choose k}w_{ih}\overline w_{jh}
\alpha_i^{p/2}\alpha_j^{p/2}\beta_h^p\right]
\longrightarrow\diag\Biggl(\sum_{h:\beta_h=1}|w_{1h}|^2,0,\dots,0\Biggr)
$$
Since the unit eigenvector of $Z_p^{\wedge k}$ corresponding to the largest eigenvalue
coincides with that of $\biggl[\sum_{h=1}^{d\choose k}w_{ih}\overline w_{jh}
\alpha_i^{p/2}\alpha_j^{p/2}\beta_h^p\biggr]$, it follows that
$u_1(p)\wedge\dots\wedge u_k(p)$ converges to $e_1\wedge\dots\wedge e_k$ up to a scalar
multiple $e^{\sqrt{-1}\theta}$, $\theta\in\bR$. By Lemma \ref{L-2.4} this implies the
desired assertion.
\end{proof}

\begin{cor}\label{C-3.3}
If the eigenvalues $a_1,\dots,a_d$ of $A$ are all distinct and the conditions of Theorem \ref{T-3.1} hold, then
$$
\lim_{p\to\infty}(A^{p/2}B^pA^{p/2})^{1/p}=V\diag(a_1b_1,a_2b_2,\dots,a_db_d)V^*.
$$
\end{cor}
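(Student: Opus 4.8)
The plan is to obtain this as an immediate consequence of Proposition \ref{P-3.2}. First I would unwind the notation of Section \ref{sec3} in the case at hand: since $a_1,\dots,a_d$ are pairwise distinct, the chain $0=i_0<i_1<\dots<i_l=d$ attached to $A$ degenerates to $l=d$ with $i_r=r$ for every $r$, so that each ``$A$-block'' $\{i_{r-1}+1,\dots,i_r\}$ is the singleton $\{r\}$.

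Next I would invoke Proposition \ref{P-3.2}, whose hypothesis --- the equivalent conditions of Theorem \ref{T-3.1} --- is exactly what is assumed here. It yields an orthonormal set $\{u_1,\dots,u_d\}$ with $Z=\sum_{i=1}^d a_ib_iu_iu_i^*$ and, for each $r=1,\dots,l$, $\sum_{i=i_{r-1}+1}^{i_r}u_iu_i^*=\sum_{i=i_{r-1}+1}^{i_r}v_iv_i^*$. With $i_r=r$ this last identity reads $u_ru_r^*=v_rv_r^*$ for every $r=1,\dots,d$; that is, the unit vectors $u_r$ and $v_r$ span the same line and hence differ only by a unimodular scalar, so in particular $a_rb_ru_ru_r^*=a_rb_rv_rv_r^*$.

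Summing over $r$ then gives $Z=\sum_{r=1}^d a_rb_rv_rv_r^*=V\diag(a_1b_1,a_2b_2,\dots,a_db_d)V^*$, which is the claim; here $Z=\lim_{p\to\infty}(A^{p/2}B^pA^{p/2})^{1/p}$ exists by Theorem \ref{T-2.5}.

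There is essentially no obstacle: the only point to watch is that the distinctness of the $a_i$ is precisely what collapses the $A$-blocks to singletons, turning the block-wise projection identity of Proposition \ref{P-3.2} into the pointwise identity $u_ru_r^*=v_rv_r^*$, after which the undetermined phases in the $u_r$ drop out of the rank-one projections. One may also note, for consistency of the statement, that condition (i) of Theorem \ref{T-3.1} forces $a_1b_1\ge a_2b_2\ge\dots\ge a_db_d$ since the $\lambda_i$ are listed in decreasing order, so the right-hand side above is genuinely the spectral decomposition of $Z$.
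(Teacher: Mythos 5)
Your proposal is correct and is exactly the deduction the paper intends: the corollary is stated as an immediate consequence of Proposition \ref{P-3.2}, with the distinctness of the $a_i$ collapsing the $A$-blocks to singletons so that $u_ru_r^*=v_rv_r^*$ for each $r$. Your use of the second form $Z=\sum_i a_ib_iu_iu_i^*$ from Proposition \ref{P-3.2} also correctly sidesteps any ambiguity from possible coincidences among the products $a_ib_i$.
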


In particular, when the eigenvalues of $A$ are all distinct and so are those of $B$, the conditions
of Theorem \ref{T-3.1} means that all the leading principal minors of $V^*W$ are non-zero.

\section{Extension to more than two matrices \label{sec4}}

Let $A_1,\dots,A_m$ be $d\times d$ positive semidefinite matrices with diagonalizations
$$
A_l=V_lD_lV_l^*,\qquad D_l=\diag\bigl(a_1^{(l)},\dots,a_d^{(l)}\bigr),
\qquad1\le l\le m.
$$
For each $p>0$ consider the positive semidefinite matrix
\begin{align*}
Z_p&:=\bigl(A_1^{p/2}A_2^{p/2}\cdots A_{m-1}^{p/2}A_m^pA_{m-1}^{p/2}\cdots
A_1^{p/2}A_1^{p/2}\bigr)^{1/p}, \\
&\ =V_1\bigl(D_1^{p/2}W_1\cdots D_{m-1}^{p/2}W_{m-1}D_m^pW_{m-1}^*D_{m-1}^{p/2}\cdots
W_1^*D_1^{p/2}\bigr)^{1/p}V_1^*,
\end{align*}
where
$$
W_l:=V_l^*V_{l+1}=\Bigl[w_{ij}^{(l)}\Bigr]_{i,j=1}^d,\qquad1\le l\le m-1.
$$
The eigenvalues of $Z_p$ are denoted as
$\lambda_1(p)\ge\lambda_2(p)\ge\dots\ge\lambda_d(p)$ in decreasing order.
Although the log-majorization in \eqref{F-2.5} is no longer available in the present
situation, we can extend Lemma \ref{L-2.2} as follows.

\begin{lemma}\label{L-4.1}
The limit $\lambda_1:=\lim_{p\to\infty}\lambda_1(p)$ exists and
\begin{equation}\label{F-4.1}
\lambda_1=\max\bigl\{a_{i_1}^{(1)}a_{i_2}^{(2)}\cdots a_{i_m}^{(m)}:
\bw(i_1,i_2,\dots,i_m)\ne0\bigr\},
\end{equation}
where
\begin{align*}
&\bw(i_1,i_2,\dots,i_m) \\
&:=\sum\Bigl\{w_{i_1j_2}^{(1)}w_{j_2j_3}^{(2)}\cdots w_{j_{m-1}i_m}^{(m)}:
1\le j_2,\dots,j_{m-1}\le d,\,a_{j_2}^{(2)}\cdots
a_{j_{m-1}}^{(m-1)}=a_{i_2}^{(2)}\cdots a_{i_{m-1}}^{(m-1)}\Bigr\}.
\end{align*}
Moreover, $a_1^{(1)}\cdots a_1^{(m)}\ge\lambda_1\ge a_d^{(1)}\cdots a_d^{(m)}$.
\end{lemma}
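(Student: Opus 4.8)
The plan is to follow the proof of Lemma~\ref{L-2.2}, the new feature being that the intermediate matrices force several summation indices. First, record the factorization $Z_p^p=A_1^{p/2}\cdots A_{m-1}^{p/2}A_m^pA_{m-1}^{p/2}\cdots A_1^{p/2}=N_pN_p^*$ with $N_p:=A_1^{p/2}A_2^{p/2}\cdots A_m^{p/2}=V_1\widetilde N_pV_m^*$, where
$$\widetilde N_p:=D_1^{p/2}W_1D_2^{p/2}W_2\cdots W_{m-1}D_m^{p/2}.$$
Then $\lambda_1(p)^p=s_1(N_p)^2=\|N_p\|^2=\|\widetilde N_p\|^2$, and comparing the operator norm with the Hilbert--Schmidt norm gives
$$\max_{i_1,i_m}\bigl|(\widetilde N_p)_{i_1i_m}\bigr|^2\le\lambda_1(p)^p\le\|\widetilde N_p\|_2^2\le d^2\max_{i_1,i_m}\bigl|(\widetilde N_p)_{i_1i_m}\bigr|^2.$$
Taking $p$-th roots, $\lambda_1(p)$ has the same limit (if it exists) as $\max_{i_1,i_m}\bigl|(\widetilde N_p)_{i_1i_m}\bigr|^{2/p}$, since $d^{\pm2/p}\to1$. (One could instead run the trace bound $\lambda_1(p)^p\le\Tr Z_p^p\le d\,\lambda_1(p)^p$ exactly as in Lemma~\ref{L-2.2}.) So it suffices to analyse the entries of $\widetilde N_p$.

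Next I would expand
$$(\widetilde N_p)_{i_1i_m}=\bigl(a_{i_1}^{(1)}a_{i_m}^{(m)}\bigr)^{p/2}\sum_{j_2,\dots,j_{m-1}}w_{i_1j_2}^{(1)}w_{j_2j_3}^{(2)}\cdots w_{j_{m-1}i_m}^{(m-1)}\Bigl(a_{j_2}^{(2)}\cdots a_{j_{m-1}}^{(m-1)}\Bigr)^{p/2},$$
and collect the inner sum according to the value $t:=a_{j_2}^{(2)}\cdots a_{j_{m-1}}^{(m-1)}$, obtaining $\sum_{t>0}c_{i_1,i_m}(t)\,t^{p/2}$, where $c_{i_1,i_m}(t)$ is the sum of the $w$-products over the paths realizing $t$; by construction $c_{i_1,i_m}(t)=\bw(i_1,j_2,\dots,j_{m-1},i_m)$ whenever $a_{j_2}^{(2)}\cdots a_{j_{m-1}}^{(m-1)}=t$. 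For a pair $(i_1,i_m)$ with $a_{i_1}^{(1)},a_{i_m}^{(m)}>0$, if some $c_{i_1,i_m}(t)\ne0$ then the term with the largest such $t$ dominates and $\bigl|(\widetilde N_p)_{i_1i_m}\bigr|^{2/p}\to a_{i_1}^{(1)}a_{i_m}^{(m)}\,t^*(i_1,i_m)$ with $t^*(i_1,i_m):=\max\{t>0:c_{i_1,i_m}(t)\ne0\}$; otherwise (or if $a_{i_1}^{(1)}a_{i_m}^{(m)}=0$) the entry is identically $0$.

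Finally, since there are only finitely many pairs, $\max_{i_1,i_m}\bigl|(\widetilde N_p)_{i_1i_m}\bigr|^{2/p}$ converges to the maximum of $a_{i_1}^{(1)}a_{i_m}^{(m)}t^*(i_1,i_m)$ over the pairs for which $t^*(i_1,i_m)$ is defined (and to $0$ if there is none). Because $a_{i_1}^{(1)}a_{i_m}^{(m)}t=a_{i_1}^{(1)}a_{i_2}^{(2)}\cdots a_{i_m}^{(m)}$ when $t=a_{i_2}^{(2)}\cdots a_{i_{m-1}}^{(m-1)}$, and $c_{i_1,i_m}(t)\ne0$ holds precisely when $\bw(i_1,\dots,i_m)\ne0$ for some tuple realizing $t$, this maximum equals $\max\{a_{i_1}^{(1)}\cdots a_{i_m}^{(m)}:\bw(i_1,\dots,i_m)\ne0\}$, which is \eqref{F-4.1}. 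The two-sided bound is then immediate: $\lambda_1(p)^p=\|Z_p^p\|\le\prod_l\|A_l^{p/2}\|^2=\prod_l(a_1^{(l)})^p$ gives the upper inequality, while if all $A_l$ are nonsingular $\lambda_1(p)^p\ge\lambda_d(p)^p=\|Z_p^{-p}\|^{-1}\ge\prod_l(a_d^{(l)})^p$ gives the lower one, the remaining case being trivial.

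The main obstacle is exactly the place where \eqref{F-4.1} genuinely differs from Lemma~\ref{L-2.2}: each entry $(\widetilde N_p)_{i_1i_m}$ is now an exponential sum $\sum_t c_{i_1,i_m}(t)t^{p/2}$, not a single monomial, so its growth rate is governed by the largest $t$ at which the $w$-products do not cancel. This is why the statement must be phrased in terms of the \emph{summed} quantity $\bw$ rather than individual $w$-products, and why one passes to the largest value at which $\bw$ is nonzero. The dominant-term estimate itself is elementary; the care required lies in bookkeeping which values $t$ are actually realizable and in checking that the resulting maximum coincides with the $\bw$-formulation.
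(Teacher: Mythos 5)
Your proposal is correct and follows essentially the same route as the paper: since $V_1^*Z_p^pV_1=\widetilde N_p\widetilde N_p^*$, your comparison of $\|\widetilde N_p\|^2$ with its largest squared entry and with $\|\widetilde N_p\|_2^2=\Tr Z_p^p$ is exactly the paper's two-sided trace estimate, and your dominant-term analysis of the exponential sums $\sum_t c_{i_1,i_m}(t)t^{p/2}$ (leading to the condition $\bw\ne0$) is the same as the paper's lower bound $\delta\eta^{p/2}-M'\alpha^{p/2}$ with $\alpha<\eta$. The only cosmetic difference is that you organize the computation through the half-product $N_p=A_1^{p/2}\cdots A_m^{p/2}$ rather than through the diagonal entries of $V_1^*Z_p^pV_1$.
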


\begin{proof}
We notice that
\begin{align*}
\bigl[V_1^*Z_p^pV_1\bigr]_{ii}
&=\bigl[D_1^{p/2}W_1\cdots D_{m-1}^{p/2}W_{m-1}D_m^pW_{m-1}^*D_{m-1}^{p/2}
\cdots W_1^*D_1^{p/2}\bigr]_{ii} \\
&=\sum_{i_2,\dots,i_{m-1},k,j_{m-1},\dots,j_2}
\bigl(a_i^{(1)}\bigr)^{p/2}w_{ii_2}^{(1)}\bigl(a_{i_2}^{(2)}\bigr)^{p/2}\cdots
w_{i_{m-2}i_{m-1}}^{(m-2)}\bigl(a_{i_{m-1}}^{(m-1)}\bigr)^{p/2} \\
&\qquad\times
w_{i_{m-1}k}^{(m-1)}\bigl(a_k^{(m)}\bigr)^p
\overline w_{j_{m-1}k}^{(m-1)}\bigl(a_{j_{m-1}}^{(m-1)}\bigr)^{p/2}
\overline w_{j_{m-2}j_{m-1}}^{(m-2)}\cdots\bigl(a_{j_2}^{(2)}\bigr)^{p/2}
\overline w_{ij_2}^{(1)}\bigl(a_i^{(1)}\bigr)^{p/2} \\
&=\sum_k\sum_{i_2,\dots,i_{m-1}}
w_{ii_2}^{(1)}w_{i_2i_3}^{(2)}\cdots w_{i_{m-1}k}^{(m-1)}
\bigl(a_i^{(1)}a_{i_2}^{(2)}\cdots a_{i_{m-1}}^{(m-1)}a_k^{(m)}\bigr)^{p/2} \\
&\qquad\times
\sum_{j_2,\dots,j_{m-1}}
\overline{w_{ij_2}^{(1)}w_{j_2j_3}^{(2)}\cdots w_{j_{m-1}k}^{(m-1)}}
\bigl(a_i^{(1)}a_{j_2}^{(2)}\cdots a_{j_{m-1}}^{(m-1)}a_k^{(m)}\bigr)^{p/2} \\
&=\sum_k\Bigg|\sum_{j_2,\dots,j_{m-1}}
w_{ij_2}^{(1)}w_{j_2j_3}^{(2)}\cdots w_{j_{m-1}k}^{(m-1)}
\bigl(a_i^{(1)}a_{j_2}^{(2)}\cdots a_{j_{m-1}}^{(m-1)}a_k^{(m)}\bigr)^{p/2}
\Bigg|^2.
\end{align*}
Let $\eta$ be the right-hand side of \eqref{F-4.1}. From the above expression we have
\begin{align*}
\lambda_1(p)^p&\le\Tr V_1^*Z_p^pV_1 \\
&=\sum_{i,k}\Bigg|\sum_{j_2,\dots,j_{m-1}}
w_{ij_2}^{(1)}w_{j_2j_3}^{(2)}\cdots w_{j_{m-1}k}^{(m-1)}
\bigl(a_i^{(1)}a_{j_2}^{(2)}\cdots a_{j_{m-1}}^{(m-1)}a_k^{(m)}\bigr)^{p/2}
\Bigg|^2 \\
&\le M\eta^p,
\end{align*}
where $M>0$ is a constant independent of $p$. Therefore,
$\limsup_{p\to\infty}\lambda_1(p)\le\eta$. On the other hand, let
$(i,i_2,\dots,i_{m-1},k)$ be such that $a_i^{(1)}a_{i_2}^{(2)}\cdots a_{i_{m-1}}^{(m-1)}a_k^{(m)}=\eta$,
and let $\delta:=|\bw(i,i_2,\dots,i_{m-1},k)|>0$. Then we have
$$
\Bigg|\sum_{j_2,\dots,j_{m-1}}
w_{ij_2}^{(1)}w_{j_2j_3}^{(2)}\cdots w_{j_{m-1}k}^{(m-1)}
\bigl(a_i^{(1)}a_{j_2}^{(2)}\cdots a_{j_{m-1}}^{(m-1)}a_k^{(m)}\bigr)^{p/2}\Bigg|
\ge\delta\eta^{p/2}-M'\alpha^{p/2}
$$
for some constants $M'>0$ and $\alpha>0$ with $\alpha<\eta$. Therefore, for sufficiently
large $p$ we have $\delta\eta^{p/2}-M'\alpha^{p/2}>0$ and
$$
d\lambda_1(p)^p\ge\Tr V_1^*Z_p^pV_1
\ge\bigl(\delta\eta^{p/2}-M'\alpha^{p/2}\bigr)^2
=\delta^2\eta^p\biggl(1-{M'\over\delta}\biggl({\alpha\over\eta}\biggr)^{p/2}\biggr)^2
$$
so that $\liminf_{p\to\infty}\lambda_1(p)\ge\eta$. The latter assertion is obvious.
\end{proof}

\begin{lemma}\label{L-4.2}
For every $i=1,\dots,d$ the limit $\lambda_i:=\lim_{p\to\infty}\lambda_i(p)$ exists.
\end{lemma}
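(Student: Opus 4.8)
The plan is to deduce Lemma~\ref{L-4.2} from Lemma~\ref{L-4.1} by passing to antisymmetric tensor powers, exactly as Lemma~\ref{L-2.3} was deduced from Lemma~\ref{L-2.2} in the two-matrix case. The starting observation is that for $M\ge0$ with eigenvalues $\mu_1\ge\cdots\ge\mu_d$, the eigenvalues of $M^{\wedge k}$ are the products $\prod_{i\in I}\mu_i$ over $I\in\cI_d(k)$, so the largest eigenvalue of $Z_p^{\wedge k}$ is precisely $\lambda_1(p)\lambda_2(p)\cdots\lambda_k(p)$. Thus it is enough to show that $Z_p^{\wedge k}$ is again a matrix of the type treated in this section, so that Lemma~\ref{L-4.1} applies to it and yields the existence of $\eta_k:=\lim_{p\to\infty}\prod_{i=1}^k\lambda_i(p)$ for every $k=1,\dots,d$.

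For this I would use the standard facts that $X\mapsto X^{\wedge k}$ is multiplicative, $(XY)^{\wedge k}=X^{\wedge k}Y^{\wedge k}$, and commutes with the continuous functional calculus of a positive semidefinite matrix, so that $(X^t)^{\wedge k}=(X^{\wedge k})^t$ for $X\ge0$ and $t>0$. Applying these to the definition of $Z_p$ gives
$$
Z_p^{\wedge k}=\Bigl((A_1^{\wedge k})^{p/2}\cdots(A_{m-1}^{\wedge k})^{p/2}(A_m^{\wedge k})^{p}(A_{m-1}^{\wedge k})^{p/2}\cdots(A_1^{\wedge k})^{p/2}\Bigr)^{1/p},
$$
which is the matrix of the present section built from the $\binom dk\times\binom dk$ positive semidefinite matrices $A_1^{\wedge k},\dots,A_m^{\wedge k}$, the roles of $V_l$ and $W_l=V_l^*V_{l+1}$ being played by the unitaries $V_l^{\wedge k}$ and $W_l^{\wedge k}=(V_l^{\wedge k})^*V_{l+1}^{\wedge k}$. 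Since Lemma~\ref{L-4.1} holds in every dimension, it shows that the largest eigenvalue of $Z_p^{\wedge k}$ converges as $p\to\infty$, i.e.\ $\eta_k$ exists for each $k=1,\dots,d$ (with $\eta_0:=1$).

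Finally I would conclude exactly as at the end of the proof of Lemma~\ref{L-2.1}. Because $\lambda_i(p)^p\le\|A_1^{p/2}\cdots A_m^p\cdots A_1^{p/2}\|\le\bigl(a_1^{(1)}\cdots a_1^{(m)}\bigr)^p$, all eigenvalues $\lambda_i(p)$ are bounded by the constant $C:=a_1^{(1)}\cdots a_1^{(m)}$ (the case $C=0$ being trivial), and from $\prod_{i=1}^{k-1}\lambda_i(p)\ge C^{-1}\prod_{i=1}^k\lambda_i(p)$ we see that $\eta_k>0$ forces $\eta_{k-1}>0$; hence there is a largest index $m_0$ (set $m_0:=0$ if $\eta_1=0$) with $\eta_{m_0}>0$, and $\eta_k>0$ exactly for $0\le k\le m_0$. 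For $1\le k\le m_0$ we get $\lambda_k(p)=\prod_{i=1}^k\lambda_i(p)\big/\prod_{i=1}^{k-1}\lambda_i(p)\to\eta_k/\eta_{k-1}$, and if $m_0<d$ then $\lambda_{m_0+1}(p)\to\eta_{m_0+1}/\eta_{m_0}=0$, so $\lambda_k(p)\to0$ for all $k>m_0$; hence $\lambda_i=\lim_{p\to\infty}\lambda_i(p)$ exists for every $i$. The only point that needs care is the second step, namely verifying that $Z_p^{\wedge k}$ genuinely falls under the hypotheses of Lemma~\ref{L-4.1}; but this is the same routine bookkeeping already carried out for Lemma~\ref{L-2.3}, and the rest of the argument is immediate.
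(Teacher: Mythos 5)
Your proof is correct and follows essentially the same route as the paper: apply Lemma~\ref{L-4.1} to the antisymmetric tensor powers $A_1^{\wedge k},\dots,A_m^{\wedge k}$ to obtain the existence of $\eta_k=\lim_{p\to\infty}\prod_{i=1}^k\lambda_i(p)$, and then recover the individual limits $\lambda_i$ by the quotient argument already used in Lemma~\ref{L-2.1}. The additional bookkeeping you supply (multiplicativity of $X\mapsto X^{\wedge k}$, its compatibility with fractional powers, and the uniform bound on the eigenvalues) is exactly what the paper leaves implicit, and it is all correct.
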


\begin{proof}
For every $k=1,\dots,d$ apply Lemma \ref{L-4.1} to $A_1^{\wedge k},\dots,A_m^{\wedge k}$
to see that
$$
\lim_{p\to\infty}\lambda_1(p)\lambda_2(p)\cdots\lambda_k(p)
$$
exists. Hence, the limit $\lim_{p\to\infty}\lambda_i(p)$ exists for $i=1,\dots,d$ as in
the proof of Lemma \ref{L-2.1}.
\end{proof}

\begin{thm}\label{T-4.3}
For every $d\times d$ positive semidefinite matrices $A_1,\dots,A_m$ the matrix
$$
Z_p=\bigl(A_1^{p/2}A_2^{p/2}\cdots A_{m-1}^{p/2}A_m^pA_{m-1}^{p/2}\cdots
A_2^{p/2}A_1^{p/2}\bigr)^{1/p}
$$
converges as $p\to\infty$.
\end{thm}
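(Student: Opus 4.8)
The plan is to imitate the proof of Theorem \ref{T-2.5} line by line, using Lemma \ref{L-4.1} and Lemma \ref{L-4.2} in place of Lemmas \ref{L-2.2} and \ref{L-2.1}. First, conjugating $A_1,\dots,A_m$ by $V_1^*$ (which leaves the $W_l$ unchanged and replaces $Z_p$ by $V_1^*Z_pV_1$), we may assume $V_1=I$, so that
$$
Z_p^p=D_1^{p/2}W_1D_2^{p/2}W_2\cdots W_{m-1}D_m^pW_{m-1}^*\cdots W_1^*D_1^{p/2}.
$$
By Lemma \ref{L-4.2} the limits $\lambda_i=\lim_{p\to\infty}\lambda_i(p)$ exist; grouping them into their distinct values
$$
\lambda_1=\dots=\lambda_{k_1}>\lambda_{k_1+1}=\dots=\lambda_{k_2}>\dots>\lambda_{k_{s-1}+1}=\dots=\lambda_{k_s}\quad(k_s=d),
$$
it suffices, exactly as at the end of the proof of Theorem \ref{T-2.5}, to show that for each $k=k_r$ with $1\le r\le s-1$ (so that $\lambda_k(p)>\lambda_{k+1}(p)$ for large $p$) the orthogonal projection of $\bC^d$ onto the span of the top $k$ eigenvectors of $Z_p$ converges as $p\to\infty$; the cluster projections then converge to projections $P_r$ and $Z_p\to\sum_{r=1}^s\lambda_{k_r}P_r$.

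Fix such a $k$, so $\lambda_k>\lambda_{k+1}$. Passing to the $k$-th antisymmetric tensor powers and using $(X^p)^{\wedge k}=(X^{\wedge k})^p$ for positive semidefinite $X$, the matrix $(Z_p^{\wedge k})^p=(A_1^{\wedge k})^{p/2}\cdots(A_{m-1}^{\wedge k})^{p/2}(A_m^{\wedge k})^p(A_{m-1}^{\wedge k})^{p/2}\cdots(A_1^{\wedge k})^{p/2}$ is a product of the same shape for the matrices $A_1^{\wedge k},\dots,A_m^{\wedge k}$. Lemma \ref{L-4.1} applied to these tensor powers shows that $\lambda_1(Z_p^{\wedge k})=\lambda_1(p)\cdots\lambda_k(p)$ converges to $\eta_k:=\lambda_1\cdots\lambda_k>0$, namely the maximum of the products of the $a_I^{(l)}$'s along index paths whose path-weight (the $\wedge k$ analogue of $\bw(\cdot)$ in \eqref{F-4.1}) is nonzero. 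Since $\lambda_2(Z_p^{\wedge k})=\lambda_1(p)\cdots\lambda_{k-1}(p)\lambda_{k+1}(p)\to\eta_{k-1}\lambda_{k+1}<\eta_k$, the top eigenvalue of $Z_p^{\wedge k}$ is simple for all large $p$, and its unit eigenvector is $u_1(p)\wedge\cdots\wedge u_k(p)$, where $Z_pu_i(p)=\lambda_i(p)u_i(p)$.

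The one genuinely new computation is the entrywise limit of $(Z_p^{\wedge k}/\eta_k)^p$. As in the proof of Lemma \ref{L-4.1}, but at the $\wedge k$ level, write $(V_1^{\wedge k})^*(Z_p^p)^{\wedge k}V_1^{\wedge k}=\tilde L(p)\tilde L(p)^*$, where $\tilde L(p)_{I,K}$ is a finite sum over intermediate tuples $J_2,\dots,J_{m-1}\in\cI_d(k)$ of products of $k\times k$ minors of $W_1,\dots,W_{m-1}$ times $(a_I^{(1)}a_{J_2}^{(2)}\cdots a_{J_{m-1}}^{(m-1)}a_K^{(m)})^{p/2}$. Collecting these terms by the value of the $a$-product and noting that $\lambda_1(Z_p^{\wedge k})=s_1(\tilde L(p))^{2/p}\to\eta_k$ forces every $a$-product strictly larger than $\eta_k$ to cancel after the internal summation, we see that $\tilde L(p)/\eta_k^{p/2}$ converges entrywise; hence $(Z_p^{\wedge k}/\eta_k)^p$ converges to a matrix $Q$, nonzero because the $\wedge k$ version of \eqref{F-4.1} supplies an index pair with nonzero path-weight realizing the maximum $\eta_k$. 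The argument then closes exactly as in Theorem \ref{T-2.5}: simplicity of the top eigenvalue of $Z_p^{\wedge k}$ together with $Q\ne0$ forces $Q=c\,\psi\psi^*$ for some scalar $c>0$ and unit vector $\psi\in(\bC^d)^{\wedge k}$, so $u_1(p)\wedge\cdots\wedge u_k(p)$—the top eigenvector of both $Z_p^{\wedge k}$ and $(Z_p^{\wedge k}/\eta_k)^p$—converges to $\psi$ up to a scalar $e^{\sqrt{-1}\theta}$, and by Lemma \ref{L-2.4} the projection onto $\lin\{u_1(p),\dots,u_k(p)\}$ converges.

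I expect the main obstacle to be precisely the bookkeeping in the third paragraph: identifying which products of the $a_I^{(l)}$ dominate $\tilde L(p)$ after the internal summation over intermediate paths, and ruling out those exceeding $\eta_k$. As with the delicate estimate in Lemma \ref{L-4.1}, this is settled by comparing the exponential growth rate of $s_1(\tilde L(p))$ with $\eta_k^p$ through Lemma \ref{L-4.2}. Everything else is a transcription of the two-matrix proof; in particular, the log-majorization exploited in Section \ref{sec2} served only to establish Lemma \ref{L-2.1}, whose role is here taken over by Lemma \ref{L-4.2}.
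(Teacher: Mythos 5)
Your proposal is correct and follows essentially the same route as the paper: reduce to convergence of the spectral projections at each gap $\lambda_{k}>\lambda_{k+1}$, pass to $k$-th antisymmetric tensor powers, compute the entrywise limit of $(Z_p^{\wedge k}/\eta_k)^p$ using Lemma \ref{L-4.1} applied to $A_1^{\wedge k},\dots,A_m^{\wedge k}$ (the terms with $a$-product exceeding $\eta_k$ vanish because their total path-weight $\bw_k$ is zero by the very definition of $\eta_k$ as a maximum over nonzero weights), and conclude via the rank-one limit $Q\ne0$ and Lemma \ref{L-2.4} exactly as in Theorem \ref{T-2.5}. The only cosmetic difference is that you factor $(Z_p^{\wedge k})^p=\tilde L(p)\tilde L(p)^*$ where the paper writes the double sum directly, and you derive the vanishing of the super-$\eta_k$ terms from the known limit of $\lambda_1(Z_p^{\wedge k})$ rather than reading it off the definition; these are equivalent.
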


\begin{proof}
The proof is similar to that of Theorem \ref{T-2.5}. Choose an orthogonal basis
$\{u_1(p),\dots,u_d(p)\}$ of $\bC^d$ such that $Z_pu_i(p)=\lambda_i(p)u_i(p)$ for
$1\le i\le d$. Let $k$ ($1\le k<d$) be such that
$\lambda_1\ge\dots\ge\lambda_k>\lambda_{k+1}$. Since \eqref{F-2.9} holds in the present
case too, $\lambda_1(Z_p^{\wedge k})$ is a simple eigenvalue of $Z_p^{\wedge k}$ for
every $p$ sufficiently large. For $I,J\in\cI_d(k)$ we write
$w_{I,J}^{(l)}:=\det W_{I,J}^{(l)}$ for $1\le l\le m-1$ and
$a_I^{(l)}:=\prod_{i\in I}a_i^{(l)}$ for $1\le l\le m$. We have
\begin{align*}
&\bigl[V_1^{*\wedge k}(Z_p^{\wedge k})^pV_1^{\wedge k}\bigr]_{I,J} \\
&\qquad=\sum_{K\in\cI_d(k)}\sum_{I_2,\dots,I_{m-1}}
w_{I,I_2}^{(1)}w_{I_2,I_3}^{(2)}\cdots w_{I_{m-1},K}^{(m-1)}
\bigl(a_I^{(1)}a_{I_2}^{(2)}\cdots a_{I_{m-1}}^{(m-1)}a_K^{(m)}\bigr)^{p/2} \\
&\qquad\qquad\times\sum_{J_2,\dots,J_{m-1}}
\overline{w_{J,J_2}^{(1)}w_{J_2,J_3}^{(2)}\cdots w_{J_{m-1},K}^{(m-1)}}
\bigl(a_J^{(1)}a_{J_2}^{(2)}\cdots a_{J_{m-1}}^{(m-1)}a_K^{(m)}\bigr)^{p/2} \\
&\qquad=\eta_k^p\sum_{K\in\cI_d(k)}\sum_{I_2,\dots,I_{m-1}}
w_{I,I_2}^{(1)}w_{I_2,I_3}^{(2)}\cdots w_{I_{m-1},K}^{(m-1)}
\Biggl({a_I^{(1)}a_{I_2}^{(2)}\cdots a_{I_{m-1}}^{(m-1)}a_K^{(m)}\over\eta_k}
\Biggr)^{p/2} \\
&\qquad\qquad\quad\times\sum_{J_2,\dots,J_{m-1}}
\overline{w_{J,J_2}^{(1)}w_{J_2,J_3}^{(2)}\cdots w_{J_{m-1},K}^{(m-1)}}
\Biggl({a_J^{(1)}a_{J_2}^{(2)}\cdots a_{J_{m-1}}^{(m-1)}a_K^{(m)}\over\eta_k}
\Biggr)^{p/2},
\end{align*}
where
$$
\eta_k:=\lambda_1\lambda_2\cdots\lambda_k
=\max\bigl\{a_{I_1}^{(1)}a_{I_2}^{(2)}\cdots a_{I_{m-1}}^{(m-1)}a_{I_m}^{(m)}:
\bw_k(I_1,I_2,\dots,I_{m-1},I_m)\ne0\bigr\}
$$
and
\begin{align*}
&\bw_k(I_1,I_2,\dots,I_{m-1},I_m) \\
&:=\sum\Bigl\{w_{I_1J_2}^{(1)}w_{J_2J_3}^{(2)}\cdots w_{J_{m-1}I_m}^{(m-1)}:
J_2,\dots,J_{m-1}\in\cI_d(k),\,a_{J_2}^{(2)}\cdots
a_{J_{m-1}}^{(m-1)}=a_{I_2}^{(2)}\cdots a_{I_{m-1}}^{(m-1)}\Bigr\}.
\end{align*}
We see that
$$
V_1^{*\wedge k}\biggl({Z_p^{\wedge k}\over\eta_k}\biggr)^pV_1^{\wedge k}
\longrightarrow
Q:=\Biggl[\sum_{K\in\cI_d(k)}\bv_k(I,K)\overline{\bv_k(J,K)}\Biggr]_{I,J}
\quad\mbox{as}\ p\to\infty,
$$
where
$$
\bv_k(I,K):=\bw_k(I,I_2,\dots,I_{m-1},K)
$$
if $\bw_k(I,I_2,\dots,I_{m-1},K)\ne0$ and
$a_I^{(1)}a_{I_2}^{(2)}\cdots a_{I_{m-1}}^{(m-1)}a_K^{(m)}=\eta_k$ for some
$I_2,\dots,I_{m-1}\in\cI_d(k)$, and otherwise $\bv_k(I,K):=0$. Since
$Q_{I,I}\ge|\bv_k(I,K)|^2>0$ for some $I,K\in\cI_d(k)$, note that $Q\ne0$. The remaining
proof is the same as in that of Theorem \ref{T-2.5}.
\end{proof}
\section{Limit of $(A^p\,\#\,B^p)^{1/p}$ as $p\to\infty$ \label{sec5}}

Another problem, seemingly more interesting, is to know what is shown on the convergence
$(A^p\,\sigma\,B^p)^{1/p}$ as $p\to\infty$, the anti-version of \eqref{F-1.2} (or Theorem
\ref{T-B.1}). For example, when $\sigma=\triangledown$, the arithmetic mean, the increasing
limit of $(A^p\,\triangledown\,B^p)^{1/p}=\bigl((A^p+B^p)/2\bigr)^{1/p}$ as $p\to\infty$
exists and
\begin{equation}\label{F-5.1}
A\vee B:=\lim_{p\to\infty}(A^{-p}\,\triangledown\,B^{-p})^{-1/p}
=\lim_{p\to\infty}(A^p+B^p)^{1/p}
\end{equation}
is the supremum of $A,B$ with respect to some spectral order among Hermitian matrices,
see \cite{Ka79} and \cite[Lemma 6.5]{An}. When $\sigma=\,!$, the harmonic mean, we have the
infimum counterpart $A\wedge B:=\lim_{p\to\infty}(A^p\,!\,B^p)^{1/p}$, the
decreasing limit.

In this section we are interested in the case where $\sigma=\#$, the geometric mean. For
each $p>0$ and $d\times d$ positive semidefinite matrices $A,B$ with the diagonalizations
in \eqref{F-2.1} and \eqref{F-2.2} we define
\begin{equation}\label{F-5.2}
G_p:=(A^p\,\#\,B^p)^{2/p},
\end{equation}
which is given as $\bigl(A^{p/2}(A^{-p/2}B^pA^{-p/2})^{1/2}A^{p/2}\bigr)^{2/p}$ if $A>0$.
The eigenvalues of $G_p$ are denoted as $\lambda_1(G_p)\ge\dots\ge\lambda_d(G_p)$ in
decreasing order.

\begin{prop}\label{P-5.1}
For every $i=1,\dots,d$  the limit
$$
\widehat\lambda_i:=\lim_{p\to\infty}\lambda_i(G_p)
$$
exists, and $a_1b_1\ge\widehat\lambda_1\ge\dots\ge\widehat\lambda_d\ge a_db_d$. Furthermore,
\begin{equation}\label{F-5.3}
(a_ib_{d+1-i})_{i=1}^d\prec_{(\log)}
\bigl(\widehat\lambda_i\bigr)_{i=1}^d\prec_{(\log)}(a_ib_i)_{i=1}^d.
\end{equation}
\end{prop}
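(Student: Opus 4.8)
The plan is to mirror the structure already developed for $Z_p$, exploiting the log-majorization \eqref{F-1.5}, which in the present notation reads $G_q\prec_{(\log)}G_p$ for $0<q<p$. First I would record the uniform bounds $a_db_d\le\lambda_i(G_p)\le a_1b_1$: since $a_1^pI\ge A^p\ge a_d^pI$ and similarly for $B^p$, the monotonicity and homogeneity properties of the operator mean $\#$ (e.g. $A^p\,\#\,B^p\le a_1^{p/2}b_1^{p/2}I$ and $\ge a_d^{p/2}b_d^{p/2}I$) give $a_d^{p/2}b_d^{p/2}I\le A^p\#B^p\le a_1^{p/2}b_1^{p/2}I$, hence $a_db_d\le\lambda_i(G_p)\le a_1b_1$ for all $p>0$ and all $i$. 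This is the $G_p$-analogue of the first line in the proof of Lemma \ref{L-2.1}.

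Next I would produce the monotone convergence of the partial products. By \eqref{F-1.5} we have, for every $k=1,\dots,d$,
$$
\prod_{i=1}^k\lambda_i(G_p)\le\prod_{i=1}^k\lambda_i(G_q)\qquad(0<q<p),
$$
so $\prod_{i=1}^k\lambda_i(G_p)$ is nonincreasing in $p$ and bounded below by $(a_db_d)^k\ge0$; therefore the limit $\widehat\eta_k:=\lim_{p\to\infty}\prod_{i=1}^k\lambda_i(G_p)$ exists for each $k$, with $\widehat\eta_1\ge\dots\ge\widehat\eta_d\ge0$. Now exactly the telescoping argument of Lemma \ref{L-2.1} applies verbatim: letting $m$ be the largest $k$ with $\widehat\eta_k>0$, for $1\le k\le m$ we get $\lambda_k(G_p)\to\widehat\eta_k/\widehat\eta_{k-1}$ (with $\widehat\eta_0:=1$), while $\lambda_{m+1}(G_p)\to0$ forces $\lambda_k(G_p)\to0$ for all $k>m$. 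Hence $\widehat\lambda_i=\lim_{p\to\infty}\lambda_i(G_p)$ exists for all $i$, and the chain $a_1b_1\ge\widehat\lambda_1\ge\dots\ge\widehat\lambda_d\ge a_db_d$ is immediate from the uniform bounds together with the monotone decrease of the $\lambda_i(G_p)$'s in the sorted order.

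Finally, for \eqref{F-5.3} I would pass the log-majorization to the limit. Recall that $\prod_{i=1}^k\lambda_i(G_p)=\bigl(\prod_{i=1}^k s_i(A^{p/2}\#_{1}\cdots)\bigr)$--more simply, since $A^p\#B^p$ has the same eigenvalues as $\bigl(A^{p/2}B^pA^{p/2}\bigr)^{1/2}$ when $A>0$ by the standard formula $A\#B=A^{1/2}(A^{-1/2}BA^{-1/2})^{1/2}A^{1/2}$ together with unitary similarity, one checks that the eigenvalues of $G_p$ and of $Z_p$ need not coincide, so instead I invoke \eqref{F-1.5} and \eqref{F-1.3}: the map $p\mapsto(\lambda_i(G_p))_i$ is log-majorization decreasing with $p\searrow0$ limit the common value $P_0\exp(\log A\dot+\log B)$, and one has the two-sided log-majorization bound \eqref{F-3.1}-type statement for the geometric mean, namely $(a_ib_{d+1-i})_i\prec_{(\log)}(\lambda_i(G_p))_i\prec_{(\log)}(a_ib_i)_i$ for every $p>0$. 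This last claim follows from $\prod_{i=1}^k\lambda_i(G_p)=\bigl(\prod_{i=1}^k s_i\bigl((A^{p}\#B^{p})\bigr)\bigr)^{2/p}$ and the Gel'fand--Naimark/Horn type estimates applied to $A^{p/2}$ and $B^{p/2}$ after writing $A^p\#B^p$ via its known minimax/determinantal characterization; concretely, $\prod_{i=1}^k\lambda_i(A^p\#B^p)\le\prod_{i=1}^k a_i^{p/2}b_i^{p/2}$ and $\ge\prod_{i=1}^k a_i^{p/2}b_{d+1-i}^{p/2}$, with equality at $k=d$ since $\det(A^p\#B^p)=(\det A^p\det B^p)^{1/2}$. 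Taking $p\to\infty$ and using the continuity of finite products preserves all the inequalities and the equality at $k=d$, yielding \eqref{F-5.3}.

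The main obstacle is the middle display of the previous paragraph: establishing the two-sided log-majorization $(a_ib_{d+1-i})_i\prec_{(\log)}(\lambda_i(G_p))_i\prec_{(\log)}(a_ib_i)_i$ uniformly in $p$. Unlike the $Z_p$ case, where one reduces to singular values of the product $A^{p/2}B^{p/2}$ and applies classical majorization directly, here one must control the eigenvalues of the geometric mean $A^p\#B^p$; the cleanest route is to use the antisymmetric tensor power identity $(A\#B)^{\wedge k}=A^{\wedge k}\#B^{\wedge k}$ (so that $\prod_{i=1}^k\lambda_i(A^p\#B^p)=\lambda_1\bigl((A^p)^{\wedge k}\#(B^p)^{\wedge k}\bigr)$) combined with the scalar bounds $\lambda_{\min}(X)\lambda_{\min}(Y)\le\lambda_1(X\#Y)\le\lambda_1(X)\lambda_1(Y)$ applied on the $k$-fold antisymmetric space, together with the determinant identity for the bottom level $k=d$. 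Once this uniform two-sided bound is in hand, the limit $p\to\infty$ is routine and \eqref{F-5.3} drops out.
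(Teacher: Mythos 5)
Your proof of the existence of the limits $\widehat\lambda_i$ and of the bounds $a_1b_1\ge\widehat\lambda_1\ge\dots\ge\widehat\lambda_d\ge a_db_d$ is correct and is essentially the paper's argument: uniform operator bounds on $A^p\,\#\,B^p$, the log-majorization monotonicity \eqref{F-5.4} coming from \cite{AH}, and the telescoping device of Lemma \ref{L-2.1}. The right-hand relation of \eqref{F-5.3} also goes through on your route: from $X\le\lambda_1(X)I$, $Y\le\lambda_1(Y)I$ and $(A\,\#\,B)^{\wedge k}=A^{\wedge k}\,\#\,B^{\wedge k}$ one gets $\prod_{i=1}^k\lambda_i(A^p\,\#\,B^p)\le\prod_{i=1}^k(a_ib_i)^{p/2}$ (note the square root you dropped in the scalar inequality: $\lambda_1(X\,\#\,Y)\le\sqrt{\lambda_1(X)\lambda_1(Y)}$, not $\lambda_1(X)\lambda_1(Y)$), and the determinant identity settles $k=d$.

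The genuine gap is in the left-hand (Gel'fand--Naimark) relation of \eqref{F-5.3}, which you yourself flag as the main obstacle; your proposed fix does not deliver it. The bound $X\,\#\,Y\ge\sqrt{\lambda_{\min}(X)\lambda_{\min}(Y)}\,I$ applied on $\cH^{\wedge k}$ yields only
$$
\prod_{i=1}^k\lambda_i(G_p)\ \ge\ \prod_{i=d-k+1}^{d}a_ib_i,
$$
i.e.\ the product of the $k$ \emph{smallest} $a_ib_i$, whereas the log-majorization $(a_ib_{d+1-i})_{i=1}^d\prec_{(\log)}(\lambda_i(G_p))_{i=1}^d$ requires $\prod_{i=1}^k\lambda_i(G_p)$ to dominate the product of the $k$ largest entries of the (unsorted) sequence $(a_ib_{d+1-i})_i$. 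Already for $d=2$, $k=1$ you need $\lambda_1(G_p)\ge\max\{a_1b_2,a_2b_1\}$, but your bound gives only $\lambda_1(G_p)\ge a_2b_2$; and Proposition \ref{P-5.2} shows the needed inequality is sharp, so no slack is available. The paper closes this gap differently: reducing by continuity to $A>0$ and $p=1$, it uses the Riccati characterization $G_1^{1/2}A^{-1}G_1^{1/2}=B$ to write $G_1=A^{1/2}VBV^*A^{1/2}$ for some unitary $V$, and then applies the Gel'fand--Naimark majorization to this product, where the eigenvalue lists of the two factors are exactly $(a_i)$ and $(b_i)$. You would need this (or an equivalent) identity; the antisymmetric-power scalar bounds alone cannot produce the cross-paired products $a_ib_{d+1-i}$.
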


\begin{proof}
Since $(a_1b_1)^{p/2}I\ge A^p\,\#\,B^p\ge(a_db_d)^{p/2}I$, we have
$a_1b_1\ge\lambda_i(G_p)\ge a_db_d$ for all $i=1,\dots,d$ and $p>0$. By the log-majorization
result in \cite[Theorem 2.1]{AH}, for every $k=1,\dots,d$ we have
\begin{equation}\label{F-5.4}
\prod_{i=1}^k\lambda_i(G_p)\ge\prod_{i=1}^k\lambda_i(G_q)
\quad\mbox{if\quad $0<p<q$}.
\end{equation}
This implies that the limit of $\prod_{i=1}^k\lambda_i(G_p)$ as $p\to\infty$ exists for
every $k=1,\dots,d$, and hence the limit $\lambda_i(G_p)$ exists for $i=1,\dots,d$ as in
the proof of Lemma \ref{L-2.1}.

To prove the latter assertion, it suffices to show that
\begin{equation}\label{F-5.5}
(a_ib_{d+1-i})_{i=1}^d\prec_{(\log)}
(\lambda_i(G_1))_{i=1}^d\prec_{(\log)}(a_ib_i)_{i=1}^d
\end{equation}
for $G_1=(A\,\#\,B)^2$. Indeed, applying this to $A^p$ and $B^p$ we have
$$
(a_ib_{d+1-i})_{i=1}^d\prec_{(\log)}
(\lambda_i(G_p))_{i=1}^d\prec_{(\log)}(a_ib_i)_{i=1}^d
$$
so that \eqref{F-5.3} follows by letting $p\to\infty$. To prove \eqref{F-5.5}, we may by
continuity assume that $A>0$. By \cite[Corollary 2.3]{AH} and \eqref{F-3.1} we have
$$
(\lambda_i(G_1))_{i=1}^d\prec_{(\log)}
\bigl(\lambda_i(A^{1/2}BA^{1/2})\bigr)_{i=1}^d\prec_{(\log)}(a_ib_i)_{i=1}^d.
$$
Since $G_1^{1/2}A^{-1}G_1^{1/2}=B$, there exists a unitary matrix $V$ such that
$A^{-1/2}G_1A^{-1/2}=VBV^*$ and hence $G_1=A^{1/2}VBV^*A^{1/2}$. Since
$\lambda_i(VBV^*)=b_i$, by the majorization of Gel'fand and Naimark we have
$$
(a_ib_{d+1-i})_{i=1}^d\prec_{(\log)}(\lambda_i(G_1))_{i=1}^d,
$$
proving \eqref{F-5.5}
\end{proof}

In view of \eqref{F-2.5} and \eqref{F-5.4} we may consider $G_p$ as the complementary
counterpart of $Z_p$ in some sense; yet it is also worth noting
that $G_p$ is symmetric in $A$ and $B$ while $Z_p$ is not. Our ultimate goal is to
prove the existence of the limit of $G_p$ in \eqref{F-5.2} as $p\to\infty$ similarly to
Theorem \ref{T-2.5} and to clarify, similarly to Theorem \ref{T-3.1}, the minimal case
when $\bigl(\widehat\lambda_i\bigr)_{i=1}^d$ is equal to the decreasing rearrangement of
$(a_ib_{d+1-i})_{i=1}^d$. However, the problem seems much more difficult, and we can currently
settle the special case of $2\times2$ matrices only.

\begin{prop}\label{P-5.2}
Let $A$ and $B$ be $2\times2$ positive semidefinite matrices with the diagonalizations
\eqref{F-2.1} and \eqref{F-2.2} with $d=2$. Then $G_p$ in \eqref{F-5.2} converges as
$p\to\infty$ to a positive semidefinite matrix whose eigenvalues are
$$
\bigl(\widehat\lambda_1,\widehat\lambda_2\bigr)=\begin{cases}
(a_1b_1,a_2b_2) & \text{if $(V^*W)_{12}=0$}, \\
(\max\{a_1b_2,a_2b_1\},\min\{a_1b_2,a_2b_1\}) & \text{if $(V^*W)_{12}\ne0$}.
\end{cases}
$$
\end{prop}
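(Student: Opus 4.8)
The plan is to reduce everything to an explicit computation in dimension two. I would first dispose of the degenerate configurations. If $(V^*W)_{12}=0$ then, $V^*W$ being $2\times2$ unitary, it is diagonal, so $w_i=e^{\sqrt{-1}\phi_i}v_i$ and $A,B$ are simultaneously diagonalized in $\{v_1,v_2\}$; hence they commute and
\[
G_p=(A^p\,\#\,B^p)^{2/p}=(A^pB^p)^{1/p}=AB=a_1b_1v_1v_1^*+a_2b_2v_2v_2^*
\]
for every $p>0$, which settles that branch. The same holds whenever $A,B$ commute, in particular when $A$ or $B$ is a scalar multiple of $I$; and if $A$ or $B$ has rank one, say $B=b_1w_1w_1^*$ with $A>0$, one computes directly, using $A\,\#\,(w_1w_1^*)=(w_1^*A^{-1}w_1)^{-1/2}w_1w_1^*$, that $G_p=b_1(w_1^*A^{-p}w_1)^{-1/p}w_1w_1^*$, whose eigenvalues converge to $(a_2b_1,0)$ or $(a_1b_1,0)$ according as $(V^*W)_{12}\ne0$ or $=0$. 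So it remains to treat the case $(V^*W)_{12}\ne0$ with $A,B>0$; after conjugating by $V^*$ and absorbing a diagonal phase we may take $A=\diag(a_1,a_2)$ with $a_1>a_2>0$ and $B=R_\theta\diag(b_1,b_2)R_\theta^{T}$ with $b_1>b_2>0$, where $R_\theta$ is a rotation with $\sin\theta\ne0$ (if $\cos\theta=0$ then $B$ is diagonal and $A,B$ commute again).

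The engine is the closed form of the geometric mean in dimension two. Writing $C_p:=A^{-p/2}B^pA^{-p/2}$ and using the Cayley--Hamilton identity $\sqrt C=(\Tr C+2\sqrt{\det C})^{-1/2}(C+\sqrt{\det C}\,I)$ for a positive definite $2\times2$ matrix $C$, together with $A^{p/2}C_pA^{p/2}=B^p$, $\det C_p=\det B^p/\det A^p=(b_1b_2/(a_1a_2))^{p}$ and $\Tr C_p=\Tr(A^{-p}B^p)$, one obtains
\[
A^p\,\#\,B^p=\frac{B^p+\rho_pA^p}{\sqrt{\tau_p+2\rho_p}},\qquad
\rho_p:=\Bigl(\frac{b_1b_2}{a_1a_2}\Bigr)^{p/2},\quad \tau_p:=\Tr(A^{-p}B^p).
\]
Since $B^p$ has diagonal entries $\cos^2\theta\,b_1^p+\sin^2\theta\,b_2^p$ and $\sin^2\theta\,b_1^p+\cos^2\theta\,b_2^p$ and off-diagonal entry $\cos\theta\sin\theta(b_1^p-b_2^p)$, this reduces the problem to tracking exponential scales of explicit $2\times2$ matrices. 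As $a_1>a_2$, $b_1>b_2$ and $\sin\theta\ne0$, the dominant term of $\tau_p=a_1^{-p}(\cos^2\theta\,b_1^p+\sin^2\theta\,b_2^p)+a_2^{-p}(\sin^2\theta\,b_1^p+\cos^2\theta\,b_2^p)$ is $\sin^2\theta\,(b_1/a_2)^p$, while $\rho_p=o((b_1/a_2)^p)$ because $a_2b_2<a_1b_1$; hence $\sin^2\theta\,(b_1/a_2)^p\le\tau_p+2\rho_p\le4(b_1/a_2)^p$ for large $p$. For the numerator, evaluating the quadratic form of $B^p+\rho_pA^p$ at the top eigenvector of $B$ and at $e_1$ gives $\lambda_1(B^p+\rho_pA^p)\ge\max(b_1^p,\rho_pa_1^p)$, whereas the trace gives $\lambda_1(B^p+\rho_pA^p)\le2(b_1^p+\rho_pa_1^p)\le4\max(b_1^p,\rho_pa_1^p)$.

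Combining these, and noting $b_1^p/(b_1/a_2)^{p/2}=(a_2b_1)^{p/2}$ and $\rho_pa_1^p/(b_1/a_2)^{p/2}=(a_1b_2)^{p/2}$, I get
\[
\tfrac12\,\mu^{p/2}\le\lambda_1(A^p\,\#\,B^p)\le\frac{4}{|\sin\theta|}\,\mu^{p/2},\qquad
\mu:=\max\{a_1b_2,a_2b_1\},
\]
for large $p$, whence $\lambda_1(G_p)=\lambda_1(A^p\,\#\,B^p)^{2/p}\to\mu$. Since $\det(A^p\,\#\,B^p)=\sqrt{\det A^p\det B^p}=(a_1a_2b_1b_2)^{p/2}$, the determinant $\det G_p=a_1a_2b_1b_2$ is independent of $p$, so $\lambda_2(G_p)=a_1a_2b_1b_2/\lambda_1(G_p)\to\min\{a_1b_2,a_2b_1\}$, giving the asserted eigenvalue list. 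Finally, for the convergence of $G_p$ itself: when $a_1b_2\ne a_2b_1$ the spectral gap $\lambda_1(G_p)-\lambda_2(G_p)$ stays bounded away from $0$, so for large $p$ the top eigenvalue of $B^p+\rho_pA^p$ is simple and its eigenprojection is that of $G_p$; dividing $B^p+\rho_pA^p$ by its dominant scale ($\rho_pa_1^p$ when $a_1b_2>a_2b_1$, otherwise $b_1^p$) shows this eigenprojection converges (to the top spectral projection of $A$ in the first case, to that of $B$ in the second), so $G_p$ converges; when $a_1b_2=a_2b_1$ both eigenvalues of $G_p$ tend to the common value $\mu$ while $\det G_p=\mu^2$, forcing $G_p\to\mu I$. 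The one genuinely new ingredient is the closed-form identity for $A^p\,\#\,B^p$ above; given it, the remaining work is routine asymptotics, the only mild nuisance being the case-checking at the degenerate (scalar, singular, or commuting) configurations.
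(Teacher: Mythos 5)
Your proof is correct, and it takes a genuinely different route from the paper's. The paper first rescales to $\det A=\det B=1$, which collapses the $2\times2$ geometric mean to the simpler identity $A\,\#\,B=(A+B)/\sqrt{\det(A+B)}$, and then invokes Kato's spectral-order supremum $(A^p+B^p)^{1/p}\to A\vee B$ (formula \eqref{F-5.1}) to pass to the limit; the singular case $a_2=0$ is handled by an $\eps$-regularization $A_\eps=\diag(1,\eps^2)$. You instead use the un-normalized Cayley--Hamilton closed form $A\,\#\,B=\bigl(B+\sqrt{\det B/\det A}\,A\bigr)\big/\sqrt{\Tr(A^{-1}B)+2\sqrt{\det B/\det A}}$ and do a direct scale-tracking estimate, squeezing $\lambda_1(A^p\,\#\,B^p)$ between constant multiples of $\mu^{p/2}$; this avoids both the determinant normalization and the appeal to $A\vee B$, so it is somewhat more self-contained, at the cost of slightly heavier bookkeeping of exponential rates. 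Your treatment of the rank-one case via $A\,\#\,(ww^*)=(w^*A^{-1}w)^{-1/2}ww^*$ is another neat alternative to the paper's $\eps$-limit. One tiny omission: the paragraph on degenerate configurations covers ``$A$ or $B$ rank one'' only under the hypothesis that the other factor is positive definite, so the case where both $A$ and $B$ are rank one with $(V^*W)_{12}\ne0$ (two non-commuting rank-one projections) is not explicitly addressed; it is, however, trivial, since then $A^p\,\#\,B^p=0$ for all $p$ (see \cite[(3.11)]{KA}), giving $G_p\equiv0$ with eigenvalues $(0,0)=(a_1b_2,a_2b_1)$, in agreement with the claimed formula. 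Finally, note that your argument actually identifies the limit matrix itself (the top spectral projection of $A$ when $a_1b_2>a_2b_1$, that of $B$ when $a_2b_1>a_1b_2$, and $\mu I$ when they are equal), which is a bit more information than the proposition states.
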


\begin{proof}
Since
$$
G_p=V\bigl((\diag(a_1,a_2))^p\,\#\,(V^*W\diag(b_1,b_2)V^*W)^p\bigr)^{2/p}V^*,
$$
we may assume without loss of generality that $V=I$ (then $V^*W=W$).

First, when $W_{12}=0$ (hence $W$ is diagonal), we have for every $p>0$
$$
G_p=\diag(a_1b_1,a_2b_2).
$$
Next, when $W_{11}=0$ (hence $W=\begin{bmatrix}0&w_1\\w_2&0\end{bmatrix}$ with
$|w_1|=|w_2|=1$), we have for every $p>0$
$$
G_p=\diag(a_1b_2,a_2b_1).
$$

In the rest it suffices to consider the case where
$W=\begin{bmatrix}w_{11}&w_{12}\\w_{21}&w_{22}\end{bmatrix}$ with $w_{ij}\ne0$ for all
$i,j=1,2$. First, assume that $\det A=\det B=1$ so that $a_1a_2=b_1b_2=1$. For every $p>0$,
since $\det A^p=\det B^p=1$, it is known \cite[Proposition 3.11]{Mo} (also
\cite[Proposition 4.1.12]{Bh2}) that
$$
A^p\,\#\,B^p={A^p+B^p\over\sqrt{\det(A^p+B^p)}}
$$
so that
$$
G_p={(A^p+B^p)^{2/p}\over\bigl(\det(A^p+B^p)\bigr)^{1/p}}.
$$
Compute
\begin{equation}\label{F-5.6}
A^p+B^p=\begin{bmatrix}
a_1^p+|w_{11}|^2b_1^p+|w_{12}|^2b_2^p&
w_{11}\overline{w_{21}}b_1^p+w_{12}\overline{w_{22}}b_2^p\\
\overline{w_{11}}w_{21}b_1^p+\overline{w_{12}}w_{22}b_2^p&
a_2^p+|w_{21}|^2b_1^p+|w_{22}|^2b_2^p
\end{bmatrix}
\end{equation}
and
\begin{align}
\det(A^p+B^p)
&=1+|w_{21}|^2(a_1b_1)^p+|w_{22}|^2(a_1b_2)^p+|w_{11}|^2(a_2b_1)^p+|w_{12}|^2(a_2b_2)^p
\nonumber\\
&\qquad+|w_{11}w_{22}-w_{12}w_{21}|^2. \label{F-5.7}
\end{align}
Hence we have
$$
\lim_{p\to\infty}\bigl(\det(A^p+B^p)\bigr)^{1/p}=a_1b_1,\qquad
\lim_{p\to\infty}\bigl(\Tr(A^p+B^p)\bigr)^{1/p}=\max\{a_1,b_1\}.
$$
Therefore, thanks to \eqref{F-5.1} we have
$$
\lim_{p\to\infty}G_p={(A\vee B)^2\over a_1b_1}.
$$
Since
$$
{1\over2}\,\Tr(A^p\,\#\,B^p)\le\bigl(\lambda_1(G_p)\bigr)^{p/2}
\le\Tr(A^p\,\#\,B^p),
$$
we obtain
\begin{align*}
\widehat\lambda_1&=\lim_{p\to\infty}\bigl(\Tr(A^p\,\#\,B^p)\bigr)^{2/p}
=\lim_{p\to\infty}{\bigl(\Tr(A^p+B^p)\bigr)^{2/p}\over\bigl(\det(A^p+B^p)\bigr)^{1/p}} \\
&={\max\{a_1^2,b_1^2\}\over a_1b_1}
=\max\biggl\{{a_1\over b_1},{b_1\over a_1}\biggr\}
=\max\{a_1b_2,a_2b_1\}.
\end{align*}
Furthermore, $\widehat\lambda_2=\min\{a_1b_2,a_2b_1\}$ follows since
$\widehat\lambda_1\widehat\lambda_2=1$.

For general $A,B>0$ let $\alpha:=\sqrt{\det A}$ and $\beta:=\sqrt{\det B}$. Since
$$
G_p=\alpha\beta\bigl((\alpha^{-1}A)^p\,\#\,(\beta^{-1}B)^p\bigr)^{2/p},
$$
we see from the above case that $G_p$ converges as $p\to\infty$ and
$$
\widehat\lambda_1
=\alpha\beta\max\{(\alpha^{-1}a_1)(\beta^{-1}b_2),(\alpha^{-1}a_2)(\beta^{-1}b_1)\}
=\max\{a_1b_2,a_2b_1\},
$$
and similarly for $\widehat\lambda_2$.

The remaining is the case when $a_2$ and/or $b_2=0$. We may assume that $a_1,b_1>0$ since
the case $A=0$ or $B=0$ is trivial. When $a_2=b_2=0$, since $a_1^{-1}A$ and $b_1^{-1}B$
are non-commuting rank one projections, we have $G_p=0$ for all $p>0$ by
\cite[(3.11)]{KA}. Finally, assume that $a_2=0$ and $B>0$. Then we may assume that
$a_1=1$ and $\det B=1$. For $\eps>0$ set $A_\eps:=\diag(1,\eps^2)$. Since
$\det(\eps^{-1}A_\eps)=1$, we have
$$
A_\eps^p\,\#\,B^p=\eps^{p/2}\bigl((\eps^{-1}A_\eps)^p\,\#\,B^p\bigr)
=\eps^{p/2}\,{(\eps^{-1}A_\eps)^p+B^p\over
\sqrt{\det\bigl((\eps^{-1}A_\eps)^p+B^p\bigr)}}.
$$
By use of \eqref{F-5.6} and \eqref{F-5.7} with $a_1=\eps^{-1}$ and $a_2=\eps$ we compute
$$
A^p\,\#\,B^p=\lim_{\eps\searrow0}A_\eps^p\,\#\,B^p
=\bigl(|w_{21}|^2b_1^p+|w_{22}|^2b_2^p\bigr)^{-1/2}\,\diag(1,0)
$$
so that
$$
\lim_{p\to\infty}G_p=\diag(b_1^{-1},0)=\diag(b_2,0),
$$
which is the desired assertion in this final situation.
\end{proof}

\appendix

\section{Proof of Lemma \ref{L-2.4}}

We may assume that $\cH=\bC^d$ by fixing an orthonormal basis of $\cH$. Let $G(k,d)$
denote the Grassmannian manifold consisting of $k$-dimensional subspaces of $\cH$. Let
$\cO_{k,d}$ denote the set of all $u=(u_1,\dots,u_k)\in\cH^k$ such that $u_1,\dots,u_k$
are orthonormal in $\cH$. Consider $\cO_{k,d}$ as a metric space with the metric
$$
d_2(u,v):=\Biggl(\sum_{i=1}^k\|u_i-v_i\|^2\Biggr)^{1/2},\qquad
u=(u_1,\dots,u_k),\ v=(v_1,\dots,v_k)\in\cH^k.
$$
Moreover, let $\widetilde\cH_{k,d}$ be the set of projectivised vectors
$u=u_1\wedge\dots\wedge u_k$ in $\cH^{\wedge k}$ of norm $1$, i.e., the quotient
space of $\cH_{k,d}:=\{u\in\cH^{\wedge k}:u=u_1\wedge\dots\wedge u_k,\,\|u\|=1\}$ under
the equivalent relation $u\sim v$ on $\cH_{k,d}$ defined as $u=e^{i\theta}v$ for some
$\theta\in\bR$. We then have the commutative diagram:
$$
\setlength{\unitlength}{1mm}
\begin{picture}(50,27)(0,0)
\put(0,22){$\cO_{k,d}$}
\put(10,23){\vector(1,0){15}}
\put(16,25){$\pi$}
\put(28,22){$G(k,d)$}
\put(28,0){$\widetilde\cH_{k,d}$}
\put(10,20){\vector(1,-1){15}}
\put(13,9){$\widetilde\pi$}
\put(32,19){\vector(0,-1){12}}
\put(34,12){$\phi$}
\end{picture}
$$
where $\pi$ and $\widetilde\pi$ are surjective maps defined for
$u=(u_1,\dots,u_k)\in\cO_{k,d}$ as
\begin{align*}
\pi(u)&:=\lin\{u_1,\dots,u_k\}, \\
\widetilde\pi(u)&:=[u_1\wedge\dots\wedge u_k],\ \mbox{the equivalence class of
$u_1\wedge\dots\wedge u_k$},
\end{align*}
and $\phi$ is the canonical representation of $G(k,d)$ by the $k$th antisymmetric tensors
(or the $k$th exterior products).

As shown in \cite{FGP}, the standard Grassmannian topology on $G(k,d)$ is the final
topology (the quotient topology) from the map $\pi$ and it coincides with the topology
induced by the gap metric:
$$
d_\gap(\cU,\cV):=\|P_\cU-P_\cV\|
$$
for $k$-dimensional subspaces $\cU,\cV$ of $\cH$ and the orthogonal projections
$P_\cU,P_\cV$ onto them. On the other hand, consider the quotient topology on
$\widetilde\cH_{k,d}$ induced from the norm on $\cH_{k,d}\subset\cH^{\wedge k}$, which is
determined by the metric
$$
\widetilde d(\widetilde\pi(u),\widetilde\pi(v)):=\inf_{\theta\in\bR}
\|u_1\wedge\cdots\wedge u_k-e^{\sqrt{-1}\theta}v_1\wedge\cdots\wedge v_k\|,
\qquad u,v\in\cO_{k,d}.
$$
It is easy to prove that
$\widetilde\pi:(\cO_{k,d},d_2)\to(\widetilde\cH_{k,d},\widetilde d)$ is continuous. Since
$(\cO_{k,d},d_2)$ is compact, it thus follows that the final topology on
$\widetilde\cH_{k,d}$ from the map $\widetilde\pi$ coincides with the
$\widetilde d$-topology.

It is clear from the above commutative diagram that the final topology on $G(k,d)$ from
$\pi$ is homeomorphic via $\phi$ to that on $\widetilde\cH_{k,d}$ from $\widetilde\pi$.
Hence $\phi$ is a homeomorphism from $(G(k,d),d_\gap)$ onto
$(\widetilde\cH_{k,d},\widetilde d)$. From the homogeneity of $(G(k,d),d_\gap)$ and
$(\widetilde\cH_{k,d},\widetilde d)$ under the unitary transformations there exist
constant $\alpha,\beta>0$ (depending on only $k,d$) such that
$$
\alpha\|P_{\pi(u)}-P_{\pi(v)}\|\le\widetilde d(\widetilde\pi(u),\widetilde\pi(v))
\le\beta\|P_{\pi(u)}-P_{\pi(v)}\|,\qquad u,v\in\cO_{k,d},
$$
which is the desired inequality.

\section{Proof of \eqref{F-1.2}}

This appendix is aimed to supply the proof of \eqref{F-1.2} for matrices $A,B\ge0$. Throughout
the appendix let $A,B$ be $d\times d$ positive semidefinite matrices with the support
projections $A^0,B^0$. We define $\log A$ in the generalized sense as
$$
\log A:=(\log A)A^0,
$$
i.e., $\log A$ is defined by the usual functional calculus on the range of $A^0$ and it
is zero on the range of $A^{0\perp}=I-A^0$, and similarly $\log B:=(\log B)B^0$. We write
$P_0:=A^0\wedge B^0$ and
$$
\log A\,\dot+\log B:=P_0(\log A)P_0+P_0(\log B)P_0.
$$
Note \cite[Section 4]{HP} that
\begin{align}
P_0\exp(\log A\,\dot+\log B)
&=\lim_{\eps\searrow0}\exp(\log(A+\eps A^{0\perp})+\log(B+\eps B^{0\perp})) \nonumber\\
&=\lim_{\eps\searrow0}\exp(\log(A+\eps I)+\log(B+\eps I)). \label{F-B.1}
\end{align}

Now, let $\sigma$ be an operator mean with the representing operator monotone function $f$ on
$(0,\infty)$, and let $\alpha:=f'(1)$. Note that $0\le\alpha\le1$ and if $\alpha=0$ (resp.,
$\alpha=1$) then $A\,\sigma\,B=A$ (resp., $A\,\sigma B=B)$ so that $(A^p\,\sigma\,B^p)^{1/p}=A$
(resp., $(A^p\,\sigma\,B^p)^{1/p}=B$) for all $A,B\ge0$ and $p>0$. So in the rest we assume
that $0<\alpha<1$.

\begin{thm}\label{T-B.1}
With the above assumptions, for every $A,B\ge0$,
\begin{equation}\label{F-B.2}
\lim_{p\searrow0}(A^p\,\sigma\,B^p)^{1/p}
=P_0\exp((1-\alpha)\log A\dot+\alpha\log B).
\end{equation}
\end{thm}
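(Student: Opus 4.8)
The plan is to reduce the general positive semidefinite case to the case $A,B>0$ and then use the known Lie--Trotter--Kato product formula for operator means in the bounded invertible setting. First I would handle the reduction: replacing $A$ by $A+\eps I$ and $B$ by $B+\eps I$, I would prove \eqref{F-B.2} for invertible matrices and then take $\eps\searrow0$, using \eqref{F-B.1} to identify the right-hand side. The key point in this reduction is a uniformity/monotonicity argument that lets the two limits $p\searrow0$ and $\eps\searrow0$ be interchanged; since for invertible matrices $A^p\,\sigma\,B^p\to I$ as $p\searrow0$ while the quantity $(A^p\,\sigma\,B^p)^{1/p}$ stays within fixed bounds (as $(\min\{\lambda_d(A),\lambda_d(B)\})^p I \lesssim A^p\,\sigma\,B^p \lesssim (\max\{\lambda_1(A),\lambda_1(B)\})^p I$), and since $A\mapsto A^p\,\sigma\,B^p$ is norm-continuous, a standard $\eps$-$\delta$ argument should suffice; alternatively one can invoke operator monotonicity of $\sigma$ in each variable to get monotone convergence and apply a Dini-type argument.

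Next, for invertible $A,B$, I would establish the formula $\lim_{p\searrow0}(A^p\,\sigma\,B^p)^{1/p}=\exp((1-\alpha)\log A+\alpha\log B)$ directly. The natural route is to write $A^p\,\sigma\,B^p=A^{p/2}f(A^{-p/2}B^pA^{-p/2})A^{p/2}$ and take logarithms: $\log(A^p\,\sigma\,B^p)=\tfrac p2\log A + \log f(A^{-p/2}B^pA^{-p/2}) + \tfrac p2\log A + o(p)$ by the BCH-type expansion, since all the matrices involved converge to $I$ at rate $O(p)$. Using $A^{-p/2}B^pA^{-p/2}=I+p(\log B-\log A)+O(p^2)$ and $f(1)=1$, $f'(1)=\alpha$, one gets $f(A^{-p/2}B^pA^{-p/2})=I+p\alpha(\log B-\log A)+O(p^2)$, hence $\log f(\cdots)=p\alpha(\log B-\log A)+O(p^2)$. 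Collecting terms, $\tfrac1p\log(A^p\,\sigma\,B^p)=\tfrac12\log A+\alpha(\log B-\log A)+\tfrac12\log A+O(p)=(1-\alpha)\log A+\alpha\log B+O(p)$, and exponentiating gives the claim. I would need to justify the error estimates carefully: the differentiability of $f$ at $1$ gives the first-order expansion of $f$ applied to a matrix converging to $I$, and the real-analyticity (or at least $C^2$ smoothness) of matrix $\log$ near $I$ controls the BCH remainder, so all $O(p^2)$ and $o(p)$ terms are genuinely uniform on the relevant compact neighborhood of $I$.

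The main obstacle is making the $p\searrow0$ and $\eps\searrow0$ interchange rigorous without circularity, since a priori the convergence in $p$ might not be uniform in $\eps$ near $\eps=0$ — indeed the support projection $P_0$ appears precisely because uniformity fails when $A^0\wedge B^0\ne I$. The cleanest fix is to prove a two-sided operator inequality: show that for fixed small $p$, $A_\eps^p\,\sigma\,B_\eps^p$ is monotone in $\eps$ (decreasing, say), apply it to get $\lim_{p}\lim_{\eps} = \lim_{\eps}\lim_{p}$ via a monotone-limit exchange, and then identify $\lim_{\eps\searrow0}\exp((1-\alpha)\log(A+\eps I)+\alpha\log(B+\eps I))=P_0\exp((1-\alpha)\log A\dot+\alpha\log B)$ using \eqref{F-B.1} exactly as quoted from \cite{HP}. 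Once the interchange is licensed, the remaining work is the BCH calculation above, which is routine. I would also separately dispose of the degenerate cases $\alpha=0,1$ (already noted to be trivial) so that $f'(1)\in(0,1)$ throughout.
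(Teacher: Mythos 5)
Your BCH calculation for invertible $A,B$ is sound, and the regularize-then-take-limits idea is natural, but the limit interchange $\lim_{p}\lim_{\eps}=\lim_{\eps}\lim_{p}$ is a genuine gap, and it sits exactly at the crux of the theorem. You propose to license the interchange via monotonicity of $(A_\eps^p\,\sigma\,B_\eps^p)^{1/p}$ in $\eps$ together with a Dini-type argument. Two things go wrong. First, while $A_\eps^p\,\sigma\,B_\eps^p$ is monotone increasing in $\eps$ (joint operator monotonicity of $\sigma$ and L\"owner--Heinz for $0<p\le1$), the outer map $X\mapsto X^{1/p}$ is \emph{not} operator monotone when $1/p>1$, which is precisely the regime $p\searrow0$; so the $1/p$-th power need not be monotone in $\eps$, and the monotone-exchange step has no foundation. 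Second, even granting monotonicity, a Dini argument needs the $p$-limit $\exp((1-\alpha)\log A_\eps+\alpha\log B_\eps)$ to be norm-continuous in $\eps$ down to $\eps=0$, and it is not: when $A^0\wedge B^0\ne I$ the rank drops discontinuously at $\eps=0$ (that is the content of \eqref{F-B.1}). Uniform convergence in $p$ therefore genuinely fails as $\eps\searrow0$ — some eigenvalues of $A_\eps^p\,\sigma\,B_\eps^p$ behave like $\eps^{cp}$, which tends to $1$ for each fixed $\eps>0$ but whose $1/p$-th power collapses as $\eps\to0$. This collapse of the support from $A^0\vee B^0$ (the support of $A^p\,\sigma\,B^p$ for a general $\sigma$) down to $A^0\wedge B^0$ is the hard part of the statement, not a technicality one can wave away with an $\eps$-$\delta$ argument.

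The paper's proof avoids the interchange and works directly with singular $A,B$. For operator means $\tau$ satisfying $P\,\tau\,Q=P\wedge Q$ on projections, the support of $A^p\,\tau\,B^p$ is already $P_0$, and a two-sided squeeze between the weighted harmonic and arithmetic means on $\cH_0=\mathrm{ran}\,P_0$ gives the limit (Step~1). For general $\sigma$, the Kubo--Ando integral representation yields $\sigma=\theta\triangledown_\beta+(1-\theta)\tau$ with $\tau$ projective; then a Taylor expansion plus the Daleckii--Krein derivative formula puts $\tfrac1p\log(A^p\,\sigma\,B^p)$ in a $2\times2$ block form on $\cH_0\oplus\cH_1$ in which the $\cH_1$-block diverges to $-\infty$ at rate $1/p$, and Lemma~\ref{L-B.2} shows the exponential of such a matrix converges to $e^{Z_0}\oplus0$. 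In your scheme you would need, in effect, a quantitative substitute for Lemma~\ref{L-B.2} giving convergence in $p$ uniformly in $\eps$, and there is no indication that is easier than the direct argument.
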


From \eqref{F-B.1} we may write
\begin{align*}
\lim_{p\searrow0}(A^p\,\sigma\,B^p)^{1/p}
&=\lim_{\eps\searrow0}\exp((1-\alpha)\log(A+\eps I)+\alpha\log(B+\eps I)) \\
&=\lim_{\eps\searrow0}\lim_{p\searrow0}((A+\eps I)^p\,\sigma\,(B+\eps I)^p)^{1/p}.
\end{align*}

The next lemma is essential to prove the theorem. The proof of the lemma is a slight
modification of that of \cite[Lemma 4.1]{HP}.

\begin{lemma}\label{L-B.2}
For each $p\in(0,p_0)$ with some $p_0>0$, a Hermitian matrix $Z(p)$ is given in the
$2\times2$ block form as
$$
Z(p)=\begin{bmatrix}Z_0(p)&Z_2(p)\\Z_2^*(p)&Z_1(p)\end{bmatrix},
$$
where $Z_0(p)$ is $k\times k$, $Z_1(p)$ is $l\times l$ and $Z_2(p)$ is $k\times l$.
Assume:
\begin{itemize}
\item[(a)] $Z_0(p)\to Z_0$ and $Z_2(p)\to Z_2$ as $p\searrow0$,
\item[(b)] there is a $\delta>0$ such that $pZ_1(p)\le-\delta I_l$ for all $p\in(0,p_0)$.
\end{itemize}
Then
$$
e^{Z(p)}\longrightarrow\begin{bmatrix}e^{Z_0}&0\\0&0\end{bmatrix}\quad
\mbox{as $p\searrow0$}.
$$
\end{lemma}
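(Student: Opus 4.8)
\textbf{Proof plan for Lemma \ref{L-B.2}.}
The plan is to diagonalise the dangerous block and show that the off-diagonal coupling is too weak to prevent collapse. First I would reduce to a normalised situation: by hypothesis (b) the matrix $pZ_1(p)$ is negative definite with norm bounded below by $\delta$; write $pZ_1(p)=-D(p)$ with $D(p)\ge\delta I_l$, so $Z_1(p)=-D(p)/p$ and the $(1,1)$-block of $Z(p)$ blows up to $-\infty$ at rate $1/p$ while, by hypothesis (a), the $(0,0)$- and $(0,1)$-blocks stay bounded. The intuitive picture is that $e^{Z(p)}$ is, up to a small error, $\exp$ applied to a matrix which on the second coordinate subspace acts like multiplication by a very large negative number, so that subspace is annihilated in the limit.

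The key technical tool I would use is an integral (or power-series) representation that separates the two blocks. One clean route is the Duhamel/Trotter-type expansion: write $Z(p)=X(p)+Y(p)$ where
$$
X(p)=\begin{bmatrix}Z_0(p)&0\\0&Z_1(p)\end{bmatrix},\qquad
Y(p)=\begin{bmatrix}0&Z_2(p)\\Z_2^*(p)&0\end{bmatrix},
$$
and use
$$
e^{Z(p)}=e^{X(p)}+\int_0^1 e^{sX(p)}\,Y(p)\,e^{(1-s)Z(p)}\,ds,
$$
iterating once more if needed. Since $\|e^{sZ_1(p)}\|=\|e^{-sD(p)/p}\|\le e^{-s\delta/p}$ and $\|e^{Z_0(p)}\|$ is bounded (as $Z_0(p)\to Z_0$), one sees directly that $e^{X(p)}\to\mathrm{diag}(e^{Z_0},0)$, and that every term of the expansion containing at least one factor hitting the second block carries a factor $e^{-c/p}$ for some $c>0$ (from $e^{sZ_1(p)}$ or from a fragment of $e^{(1-s)Z(p)}$ restricted appropriately), while $\|Y(p)\|$ and $\|e^{(1-s)Z(p)}\|\le e^{(1-s)\|Z(p)\|}\le e^{C/p}$ are controlled; choosing the expansion deep enough (or estimating the remainder of the full series $\sum_n Z(p)^n/n!$ block-wise) makes the coupling contributions $O(e^{-c'/p})\to0$. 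An alternative is to note $\|e^{Z(p)}-e^{X(p)}\|\le\|Y(p)\|\,e^{\max(\|Z(p)\|,\|X(p)\|)}$ is \emph{not} good enough by itself, which is exactly why the block-wise bookkeeping above is needed: one must exploit that the large-norm part of $X(p)$ is on the subspace that $Y(p)$ only reaches through a damped exponential.

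The main obstacle, as just indicated, is precisely this interplay: $\|Z(p)\|$ itself grows like $1/p$ because of the $Z_1(p)$ block, so crude operator-norm perturbation bounds lose. The proof must keep track of \emph{where} each exponential factor lives and extract the exponentially small damping $e^{-\delta s/p}$ from the parts of the expansion that couple to the second block, while using only polynomial-in-$1/p$ growth on the rest; since $e^{-\delta s/p}$ beats any power of $1/p$ as $p\searrow0$, the coupling vanishes in the limit. Once that estimate is in place, combining it with $e^{X(p)}\to\mathrm{diag}(e^{Z_0},0)$ (immediate from (a) and (b)) gives the claim. Following \cite[Lemma 4.1]{HP}, this is carried out with a convenient contour-integral representation of $e^{Z(p)}$, which automates the block bookkeeping; I would present it that way.
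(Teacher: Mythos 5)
Your strategy (Duhamel expansion off the block-diagonal part $X(p)$) is genuinely different from the paper's proof, which works spectrally: it lists the eigenvalues of $Z(p)$, uses the majorization $\sum_{i\le r}\mu_i(p)\le\sum_{i\le r}\lambda_i(p)$ to pin the top $k$ eigenvalues to those of $Z_0$, shows $\lambda_i(p)\to-\infty$ for $i>k$ from $pZ(p)\le\mathrm{diag}(pZ_0(p),-\delta I_l)+o(1)$, and then runs a compactness/variational argument on the eigenvectors. Your route is viable, but as written it has a genuine gap at its central estimate.

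The gap is the bound $\|e^{(1-s)Z(p)}\|\le e^{(1-s)\|Z(p)\|}\le e^{C/p}$, which you describe as ``controlled.'' It is not: $e^{C/p}\to\infty$, and the damping you extract from the coupled blocks cannot beat it. Indeed, each factor $e^{tZ_1(p)}$ contributes $e^{-t\delta/p}$, but $t$ is an integration variable, so after integrating, one such factor yields only $O(p)$ (not $O(e^{-c/p})$ as you claim), and $n$ nested factors yield $O(p^n)$; a term of size $p^n e^{C/p}$ diverges for every fixed $n$, so no finite depth of the expansion closes the argument, and the block-wise tail estimate of $\sum_n Z(p)^n/n!$ runs into the same problem. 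The missing idea is that, because $Z(p)$ is Hermitian, $\|e^{rZ(p)}\|=e^{r\lambda_{\max}(Z(p))}$, and $\lambda_{\max}(Z(p))$ is \emph{uniformly bounded above}: for a unit vector $v\oplus w$, hypothesis (b) gives $\langle w,Z_1(p)w\rangle\le-(\delta/p)\|w\|^2\le0$, so $\langle v\oplus w,Z(p)(v\oplus w)\rangle\le\|Z_0(p)\|+2\|Z_2(p)\|=:C$, which is bounded by (a). (This quadratic-form computation is exactly the paper's inequality \eqref{F-B.9}.) With $\|e^{rZ(p)}\|\le e^{rC}$ and likewise for $X(p)$, a single iteration of Duhamel suffices: the first-order term is off-diagonal with blocks bounded by $e^{C}\|Z_2(p)\|\int_0^1e^{-(1-s)\delta/p}\,ds=O(p)$, and the second-order remainder has a middle factor $Y(p)e^{tX(p)}Y(p)$ whose dangerous (lower-right) block is subsequently hit by $e^{sZ_1(p)}$, giving again an integrable damping $e^{-s\delta/p}+e^{-t\delta/p}$ and hence $O(p)$ overall. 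Combined with $e^{X(p)}\to\mathrm{diag}(e^{Z_0},0)$ (immediate from (a) and (b)), this proves the lemma. So your plan is repairable and arguably more quantitative than the paper's (it gives an $O(p)$ rate), but the uniform upper bound on the spectrum of $Z(p)$ must be stated and used in place of the $e^{C/p}$ bound; without it the step fails.
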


\begin{proof}
We list the eigenvalues of $Z(p)$ in decreasing order (with multiplicities) as
$$
\lambda_1(p)\ge\dots\ge\lambda_k(p)\ge\lambda_{k+1}(p)\ge\dots\ge\lambda_m(p)
$$
together with the corresponding orthonormal eigenvectors
$$
u_1(p),\dots,u_k(p),u_{k+1}(p),\dots,u_m(p),
$$
where $m:=k+l$. Then
\begin{equation}\label{F-B.3}
e^{Z(p)}=\sum_{i=1}^me^{\lambda_i(p)}u_i(p)u_i(p)^*.
\end{equation}
Furthermore, let $\mu_1(p)\ge\dots\ge\mu_k(p)$ be the eigenvalues of $Z_0(p)$ and
$\mu_1\ge\dots\ge\mu_k$ be the eigenvalues of $Z_0$ Then $\mu_i(p)\to\mu_i$ as $p\searrow0$
thanks to assumption (a). By the majorization result for eigenvalues in
\cite[Corollary 7.2]{An} we have
\begin{equation}\label{F-B.4}
\sum_{i=1}^r\mu_i(p)\le\sum_{i=1}^r\lambda_i(p),\qquad1\le r\le k.
\end{equation}
Since
$$
pZ(p)\le\begin{bmatrix}pZ_0(p)&pZ_2(p)\\pZ_2^*(p)&-\delta I_l\end{bmatrix}
\longrightarrow\begin{bmatrix}0&0\\0&-\delta I_l\end{bmatrix}\quad
\mbox{as $p\searrow0$}
$$
thanks to assumptions (a) and (b), it follows that, for $k<i\le m$,
$p\lambda_i(p)<-\delta/2$ for any $p>0$ sufficiently small so that
\begin{equation}\label{F-B.5}
\lim_{p\searrow0}\lambda_i(p)=-\infty,\qquad k<i\le m.
\end{equation}
Hence, it suffices to prove that for any sequence $(p_0>)\ p_n\searrow0$ there exists
a subsequence $\{p_n'\}$ of $\{p_n\}$ such that we have for $1\le i\le k$
\begin{align}
\lambda_i(p_n')&\longrightarrow\mu_i\quad\mbox{as $n\to\infty$}, \label{F-B.6}\\
u_i(p_n')&\longrightarrow v_i\oplus0\in\bC^k\oplus\bC^l\quad\mbox{as $n\to\infty$},
\label{F-B.7}\\
Z_0v_i&=\mu_iv_i. \label{F-B.8}
\end{align}
Indeed, it then follows that $v_1,\dots,v_k$ are orthonormal vectors in $\bC^k$, so from
\eqref{F-B.3} and \eqref{F-B.5} we obtain
$$
\lim_{n\to\infty}e^{Z(p_n')}
=\sum_{i=1}^ke^{\mu_i}v_iv_i^*\oplus0=e^{Z_0}\oplus0.
$$

Now, replacing $\{p_n\}$ with a subsequence, we may assume that $u_i(p_n)$ itself converges
to some $u_i\in\bC^m$ for
$1\le i\le k$. Writing $u_i(p_n)=v_i^{(n)}\oplus w_i^{(n)}$ in $\bC^k\oplus\bC^l$, we have
\begin{align}
\lambda_1(p_n)
&=\bigl\<v_i^{(n)}\oplus w_i^{(n)},Z(p_n)(v_i^{(n)}\oplus w_i^{(n)})\bigr\> \nonumber\\
&=\bigl\<v_i^{(n)},Z_0(p)v_i^{(n)}\bigr\>
+2\Re\bigl\<v_i^{(n)},Z_2(p_n)w_i^{(n)}\bigr\>
+\bigl\<w_i^{(n)},Z_1(p_n)w_i^{(n)}\bigr\> \nonumber\\
&\le\bigl\<v_i^{(n)},Z_0(p_n)v_i^{(n)}\bigr\>
+2\Re\bigl\<v_i^{(n)},Z_2(p_n)w_i^{(n)}\bigr\>
-{\delta\over p_n}\,\big\|w_i^{(n)}\big\|^2 \label{F-B.9}
\end{align}
due to assumption (b). For $i=1$, since $\mu_1(p_n)\le\lambda_1(p_n)$ by \eqref{F-B.4} for
$r=1$, it follows from \eqref{F-B.9} that
$$
p_n\mu_1(p_n)\le p_n\|Z_0(p_n)\|+2p_n\|Z_2(p_n)\|-\delta\big\|w_1^{(n)}\big\|^2,
$$
where $\|Z_0(p_n)\|$ and $\|Z_2(p_n)\|$ are the operator norms. As $n\to\infty$
($p_n\searrow0$), by assumption (a) we have $w_1^{(n)}\to0$ so that
$u_1(p_n)\to u_1=v_1\oplus0$ in $\bC^k\oplus\bC^l$. From \eqref{F-B.9} again we furthermore have
$$
\limsup_{n\to\infty}\lambda_1(p_n)\le\<v_1,Z_0v_1\>
\le\mu_1\le\liminf_{n\to\infty}\lambda_1(p_n)
$$
since $\mu_1(p_n)\le\lambda_1(p_n)$ and $\mu_1(p_n)\to\mu_1$. Therefore,
$\lambda_1(p_n)\to\<v_1,Z_0v_1\>=\mu_1$ and hence $Z_0v_1=\mu_1v_1$. Next, when $k\ge2$ and $i=2$,
since $\lambda_2(p_n)$ is bounded below by \eqref{F-B.4} for $r=2$, it follows
as above that $w_2^{(n)}\to0$ and hence $u_2(p_n)\to u_2=v_2\oplus0$. Therefore,
$$
\limsup_{n\to\infty}\lambda_2(p_n)\le\<v_2,Z_0v_2\>
\le\mu_2\le\liminf_{n\to\infty}\lambda_2(p_n)
$$
so that $\lambda_2(p_n)\to\<v_2Z_0v_2\>=\mu_2$ and $Z_0v_2=\mu_2v_2$, since $\mu_2$
is the largest eigenvalue of $Z_0$ restricted to $\{v_1\}^\perp\cap\bC^k$. Repeating
this argument we obtain \eqref{F-B.6}--\eqref{F-B.8} for $1\le i\le k$.
\end{proof}

Note that the lemma and its proof hold true even when the assumption $Z_2(p)\to Z_2$
in (b) is slightly relaxed into $p^{1/3}Z_2(p)\to0$ as $p\searrow0$. (For this, from
\eqref{F-B.9} note that $p_n^{-1/3}w_i^{(n)}\to0$ and so $Z_2(p_n)w_i^{(n)}\to0$.)

\bigskip\noindent
{\it Proof of Theorem \ref{T-B.1}.}\enspace
Let us divide the proof into two steps. In the proof below we denote by $\triangledown_\alpha$
and $!_\alpha$ the weighted arithmetic and harmonic operator means having the representing
functions $(1-\alpha)+\alpha x$ and $x/((1-\alpha)x+\alpha)$, respectively. Note that
$$
A\,!_\alpha\,B\le A\,\sigma\,B\le A\,\triangledown_\alpha\,B,\qquad A,B\ge0.
$$

\noindent
{\it Step 1.}\enspace
First, we prove the theorem in the case where $P\,\sigma\,Q=P\wedge Q$ for all orthogonal
projections $P,Q$ (this is the case, for instance, when $\sigma$ is the weighted harmonic
operator mean $!_\alpha$, see \cite[Theorem 3.7]{KA}). Let $\cH_0$ be the range of $P_0$
($=A^0\,!_\alpha\,B^0=A^0\,\sigma\,B^0$). From the operator monotonicity of $\log x$ ($x>0$)
it follows that, for every $p>0$,
\begin{equation}\label{F-B.10}
{1\over p}\log(A^p\,!_\alpha\,B^p)\big|_{\cH_0}\le{1\over p}\log(A^p\,\sigma\,B^p)\big|_{\cH_0}
\le{1\over p}\log\bigl(P_0(A^p\,\triangledown_\alpha\,B^p)P_0\bigr)\big|_{\cH_0}.
\end{equation}
For every $\eps>0$ we have
\begin{align*}
(A+\eps A^{0\perp})^p\,!_\alpha\,(B+\eps B^{0\perp})^p
&=\bigl((A+\eps A^{0\perp})^{-p}\,\triangledown_\alpha\,(B+\eps B^{0\perp})^{-p}\bigr)^{-1} \\
&=\bigl(A^{-p}\,\triangledown_\alpha\,B^{-p}
+\eps^{-p}(A^{0\perp}\,\triangledown_\alpha\,B^{0\perp})\bigr)^{-1},
\end{align*}
where $A^{-p}=(A^{-1})^p$ and $B^{-p}=(B^{-1})^p$ are taken as the generalized inverses.
Therefore,
\begin{align}
P_0\bigl((A+\eps A^{0\perp})^p\,!_\alpha\,(B+\eps B^{0\perp})^p\bigr)P_0
&\ge\bigl(P_0\bigl(A^{-p}\,\triangledown_\alpha\,B^{-p}
+\eps^{-p}(A^{0\perp}\,\triangledown_\alpha\,B^{0\perp})\bigr)P_0\bigr)^{-1} \nonumber\\
&=\bigl(P_0(A^{-p}\,\triangledown_\alpha\,B^{-p})P_0\bigr)^{-1}, \label{F-B.11}
\end{align}
since the support projection of $A^{0\perp}+B^{0\perp}$ is
$A^{0\perp}\vee B^{0\perp}=P_0^\perp$. In the above, $(-)^{-1}$ is the generalized
inverse (with support $\cH_0$) and the inequality follows from the operator convexity of
$x^{-1}$ ($x>0$). Letting $\eps\searrow0$ in \eqref{F-B.11} gives
$$
A^p\,!_\alpha\,B^p=P_0(A^p\,!_\alpha\,B^p)P_0
\ge\bigl(P_0(A^{-p}\,\triangledown_\alpha\,B^{-p})P_0\bigr)^{-1}
$$
so that
\begin{equation}\label{F-B.12}
{1\over p}\log(A^p\,!_\alpha\,B^p)\big|_{\cH_0}
\ge-{1\over p}\log\bigl(P_0(A^{-p}\,\triangledown_\alpha\,B^{-p})P_0\bigr)\big|_{\cH_0}.
\end{equation}
Combining \eqref{F-B.10} and \eqref{F-B.12} yields
\begin{equation}\label{F-B.13}
-{1\over p}\log\bigl(P_0(A^{-p}\,\triangledown_\alpha\,B^{-p})P_0\bigr)\big|_{\cH_0}
\le{1\over p}\log(A^p\,\sigma\,B^p)\big|_{\cH_0}
\le{1\over p}\log\bigl(P_0(A^p\,\triangledown_\alpha\,B^p)P_0\bigr)\big|_{\cH_0}.
\end{equation}

Since
$$
A^{-p}=A^0-p\log A+o(p),\qquad B^{-p}=B^0-p\log B+o(p)
$$
as $p\searrow0$, we have
$$
A^{-p}\,\triangledown_\alpha\,B^{-p}
=A^0\,\triangledown_\alpha\,B^0-p((\log A)\,\triangledown_\alpha(\log B))+o(p)
$$
so that
$$
P_0(A^{-p}\,\triangledown_\alpha\,B^{-p})P_0=P_0-p((1-\alpha)\log A\dot+\alpha\log B)+o(p).
$$
Therefore,
\begin{equation}\label{F-B.14}
-{1\over p}\log\bigl(P_0(A^{-p}\,\triangledown\,B^{-p})P_0\bigr)\big|_{\cH_0}
=((1-\alpha)\log A\dot+\alpha\log B)\big|_{\cH_0}+o(1).
\end{equation}
Similarly,
\begin{equation}\label{F-B.15}
{1\over p}\log\bigl(P_0(A^p\,\triangledown\,B^p)P_0\bigr)\big|_{\cH_0}
=((1-\alpha)\log A\dot+\alpha\log B)\big|_{\cH_0}+o(1).
\end{equation}
From \eqref{F-B.13}--\eqref{F-B.15} we obtain
$$
\lim_{p\searrow0}{1\over p}\log(A^p\,\sigma\,B^p)\big|_{\cH_0}
=((1-\alpha)\log A\dot+\alpha\log B)\big|_{\cH_0},
$$
which yields the required limit formula.

\medskip\noindent
{\it Step 2.}\enspace
For a general operator mean $\sigma$ the integral representation theorem
\cite[Theorem 4.4]{KA} says that there are $0\le\theta\le1$, $0\le\beta\le1$ and an operator
mean $\tau$ such that
$$
\sigma=\theta\triangledown_\beta+(1-\theta)\tau
$$
and $P\,\tau\,Q=P\wedge Q$ for all orthogonal projections $P,Q$. Moreover, $\tau$ has the
representing operator monotone function $g$ on $(0,\infty)$ for which $\gamma:=g'(1)\in(0,1)$
and
$$
\alpha=\theta\beta+(1-\theta)\gamma.
$$
We may assume that $0<\theta\le1$ since the case $\theta=0$ was shown in Step 1. Moreover,
when $\theta=1$, we have $\beta=\alpha\in(0,1)$. For the present, assume that $0<\theta\le1$
and $0<\beta<1$. Let $A,B\ge0$ be given, and note that
$A^0\,\sigma\,B^0=\theta A^0\,\triangledown_\beta\,B^0+(1-\theta)(A^0\wedge B^0)$ has the
support projection $A^0\vee B^0$. Let $\cH$, $\cH_0$ and $\cH_1$ denote the ranges of
$A^0\vee B^0$, $P_0=A^0\wedge B^0$ and $A^0\vee B^0-P_0$, respectively, so that
$\cH=\cH_0\oplus\cH_1$. Note that the support of $A^p\,\sigma\,B^p$ for any $p>0$ is $\cH$.
We will describe ${1\over p}\log(A^p\,\sigma\,B^p)\big|_\cH$ in the $2\times2$ block form
with respect to the decomposition $\cH=\cH_0\oplus\cH_1$. Let
$$
Z_0:=((1-\gamma)\log A\dot+\gamma\log B)|_{\cH_0}.
$$
It follows from Step 1 that $\lim_{p\searrow0}(A^p\,\tau\,B^p)^{1/p}=P_0e^{Z_0}P_0$ and
hence
\begin{align*}
A^p\,\tau\,B^p&=P_0\bigl(e^{Z_0}+o(1)\bigr)^pP_0 \\
&=P_0\bigl(I_{\cH_0}+p\log\bigl(e^{Z_0}+o(1)\bigr)+o(p)
\bigr)P_0 \\
&=P_0\bigl(I_{\cH_0}+pZ_0+o(p)\bigr)P_0 \\
&=P_0+p((1-\gamma)\log A\dot+\gamma\log B)+o(p).
\end{align*}
In the above, the third equality follows since
$\log\bigl(e^{Z_0}+o(1)\bigr)=Z_0+o(1)$. On the other hand, we have
$$
A^p\,\triangledown_\beta\,B^p
=A^0\,\triangledown_\beta\,B^0+p((\log A)\,\triangledown_\beta\,(\log B))+o(p).
$$
Therefore, we have
\begin{align*}
A^p\,\sigma\,B^p&=\theta(A^0\,\triangledown_\beta\,B^0)+(1-\theta)P_0 \\
&\qquad+p\theta((\log A)\,\triangledown_\beta(\log B))
+p(1-\theta)((1-\gamma)\log A\dot+\gamma\log B)+o(p).
\end{align*}
Setting
\begin{align*}
C&:=\bigl(\theta(A^0\,\triangledown_\beta\,B^0)+(1-\theta)P_0\bigr)\big|_\cH, \\
H&:=\bigl(\theta((\log A)\,\triangledown_\beta(\log B))
+(1-\theta)((1-\gamma)\log A\,\dot+\gamma\log B)\bigr)\big|_\cH,
\end{align*}
we write
\begin{equation}\label{F-B.16}
{1\over p}\log(A^p\,\sigma\,B^p)\big|_\cH
={1\over p}\log(C+pH+o(p)),
\end{equation}
which $C$ is a positive definite contraction on $\cH$ and $H$ is a Hermitian operator on
$\cH$. Note that the eigenspace of $C$ for the eigenvalue $1$ is $\cH_0$. Hence, with a
basis consisting of orthonormal eigenvectors for $C$ we may assume that $C$ is diagonal so
that $C=\diag(c_1,\dots,c_m)$ with
$$
c_1=\dots=c_k=1>c_{k+1}\ge\dots\ge c_m>0
$$
where $m=\dim\cH$ and $k=\dim\cH_0$.

Applying the Taylor formula (see, e.g., \cite[Theorem 2.3.1]{Hi2} to $\log(C+pH+o(p))$ we
have
\begin{equation}\label{F-B.17}
\log(C+pH+o(p))=\log C+pD\log(C)(H)+o(p),
\end{equation}
where $D\log(C)$ denotes the Fr\'echet derivative of the matrix functional calculus by
$\log x$ at $C$. The Daleckii and Krein's derivative formula (see, e.g.,
\cite[Theorem 2.3.1]{Hi2}) says that
\begin{equation}\label{F-B.18}
D\log(C)(H)=\Biggl[{\log c_i-\log c_j\over c_i-c_j}\Biggr]_{i,j=1}^m\circ H,
\end{equation}
where $\circ$ denotes the Schur (or Hadamard) product and $(\log c_i-\log c_j)/(c_i-c_j)$ is
understood as $1/c_i$ when $c_i=c_j$. We write $D\log(C)(H)$ in the $2\times2$ block form on
$\cH_0\oplus\cH_1$ as $\begin{bmatrix}Z_0&Z_2\\Z_2^*&Z_1\end{bmatrix}$ where
$Z_0:=P_0HP_0|_{\cH_0}$. By \eqref{F-B.16}--\eqref{F-B.18} we can write
$$
{1\over p}\log(A^p\,\sigma\,B^p)={1\over p}\log C+D\log(C)(H)+o(1)
=\begin{bmatrix}Z_0(p)&Z_2(p)\\Z_2^*(p)&Z_1(p)\end{bmatrix},
$$
where
\begin{align*}
Z_0(p)&=Z_0+o(1),\qquad Z_2(p)=Z_2+o(1), \\
Z_1(p)&={1\over p}\,\diag(\log c_{k+1},\dots,\log c_m)+Z_1+o(1).
\end{align*}
This $2\times2$ block form of $Z(p):={1\over p}\log(A^p\,\sigma\,B^p)\big|_\cH$ satisfies
assumptions (a) and (b) of Lemma \ref{L-B.2} for $p\in(0,p_0)$ with a sufficiently small
$p_0>0$. Therefore, the lemma implies that
$$
\lim_{p\searrow0}(A^p\,\sigma\,B^p)^{2/p}\big|_\cH
=\lim_{p\searrow0}\exp\biggl({1\over p}\log(A^p\,\sigma\,B^p)\big|_\cH\biggr)
=e^{Z_0}\oplus0
$$
on $\cH=\cH_0\oplus\cH_1$. Since
\begin{align*}
Z_0=P_0HP_0|_{\cH_0}&=\theta((1-\beta)\log A\dot+\beta\log B)
+(1-\theta)((1-\gamma)\log A\dot+\gamma\log B) \\
&=(1-\alpha)\log A\dot+\alpha\log B,
\end{align*}
we obtain the desired limit formula.

For the remaining case where $0<\theta<1$ and $\beta=0$ or $1$ the proof is similar to the
above when we take as $\cH$ the range of $A^0$ (for $\beta=0$) or $B^0$ (for $\beta=1$)
instead of the range of $A^0\vee B^0$.\qed

\bigskip
Finally, we remark that the same method as in the proof of Step 2 above can also be applied
to give an independent proof of \eqref{F-1.1} for matrices $A,B\ge0$.

\end{document}